\documentclass[11pt,a4paper]{amsart}

\usepackage{geometry}
\geometry{
	a4paper,
	total={170mm,257mm},
	left=20mm,
	top=20mm,
}

\pdfoutput=1
\usepackage{amsmath,amsfonts,amssymb,amsthm}
\usepackage{xcolor}
\usepackage{todonotes}

\usepackage{pdfpages}
\usepackage[normalem]{ulem}
\usepackage{url}
\usepackage{enumerate}
\usepackage{verbatim}
\usepackage{mathrsfs}
\usepackage{csquotes}
\usepackage{mathtools}
\usepackage{lipsum}
\urlstyle{sf}

\usepackage{enumitem}

\usepackage{amsmath}        % extensions for typesetting of math
\usepackage{amsfonts}       % math fonts
\usepackage{amsthm}         % theorems, definitions, etc.
\usepackage{bbding}         % various symbols (squares, asterisks, scissors, ...)
\usepackage{bm}             % boldface symbols (\bm)
\usepackage{graphicx}       % embedding of pictures
\usepackage{fancyvrb}       % improved verbatim environment

\usepackage[compress,square,numbers]{natbib}         % citation style AUTHOR (YEAR), or AUTHOR [NUMBER]

\usepackage{hyperref}
\hypersetup{
	colorlinks   = true,
	pdfauthor    = {Vitezslav Kala and Lucien Sima},
	pdftitle     = {},
	pdfkeywords  = {},
	pdfcreator   = {Pdflatex},
	pdfpagemode  = UseNone,
	pdfstartview = FitH
}

%\bibliographystyle{ksfh_nat}
%\usepackage[nottoc]{tocbibind} % makes sure that bibliography and the lists
% of figures/tables are included in the table
% of contents
\usepackage{dcolumn}        % improved alignment of table columns
\usepackage{booktabs}       % improved horizontal lines in tables
\usepackage{xcolor}         % typesetting in color
\usepackage{comment}
\usepackage{float}

\makeatletter

\theoremstyle{plain}
\newtheorem{thm}{Theorem}[section]

\newtheorem{claim}[thm]{Proposition}
\newtheorem{obs}[thm]{Lemma}
\newtheorem{coro}[thm]{Corollary}

\theoremstyle{definition}
\newtheorem{defn}[thm]{Definition}
\newtheorem{notation}[thm]{Notation}
\newtheorem{question}[thm]{Question}

%%% An environment for proofs

\newcommand{\Zn}{\Z^{n}}

\newcommand{\mb}{\mathbf}

\newcommand{\N}{\mathbb{N}}
\newcommand{\Z}{\mathbb{Z}}
\newcommand{\Q}{\mathbb{Q}}

\usepackage{tikz}

\newcommand{\isn}{\mathrm{Isol}_n}

\newcommand{\is}{\mathrm{Isol}}

%\DeclarePairedDelimiter\set[2]\lbrace\rbrace{\,#1\mathclose{}:\mathopen{}#2\,}

%%%%%%%%%%%%%%%%%%%%%%%%%%%%%%%%%%
\title{On generators of commutative semifields}

\author{V\' \i t\v ezslav Kala}
\author{Lucien \v S\' \i ma}
\address{
	Charles University,
	Faculty of Mathematics and Physics,
	Department of Algebra,
	Sokolov\-sk\'a 83, 18600 Praha 8,
	Czech Republic
}
\email{
	vitezslav.kala@matfyz.cuni.cz\\ 
	luciensima@gmail.com
}
%\thanks{V. K.  was supported by Czech Science Foundation GACR, grant 21-00420M,  and Charles University Research Centre program UNCE/SCI/022}
\keywords{commutative semiring, ideal-simple semiring, finitely generated semiring, parasemifield, semifield}
\subjclass[2010]{Primary 12K10, 20M14, secondary 05C05, 06F20, 16Y60}

\begin{document}

	\begin{abstract}
		We study ideal-simple commutative semirings and summarize the results giving their classification, in particular when they are finitely generated. In the principal case of (para)semifields, we then consider their minimal number of generators and show that it grows linearly with the depth of an associated rooted forest.
	\end{abstract}

	\maketitle

\section{Introduction}

Semirings and semifields are a natural generalization of rings and fields, which found its applications in various areas of mathematics including cryptography, theoretical computer science, and tropical geometry \cite{droste,G,ims,ir,litvinov,mmr,monico,M,zumbragel}. 
A number of recent works \cite{IKN,JK,JKK,Kala15,KK,KKK,KaKo1,KalKor,KNZ,KL, Le} focused on the study of simple semirings and semifields, in particular on the finitely generated ones that provide rich and interesting structure.

To be more precise, recall that a \emph{(commutative) semiring} is a set $S$
equipped with two binary associative and commutative operations, addition $\oplus$ and multiplication $\cdot$, such that multiplication distributes over addition (in this paper, all semirings will be commutative, i.e., multiplication is always assumed to be commutative). 
Semirings offer a natural extension and generalization of (commutative) rings, and, in particular, many of the structural results on rings carry over. Of particular interest for us will be generalizations of simple rings, i.e., those possessing no non-trivial ideals. Of course, every simple (commutative) ring is just a field, and ideals in rings correspond to congruences.

This correspondence no longer holds in semirings, and so one distinguishes \textit{congruence-simple} and \textit{ideal-simple}
semirings. 
While congruence-simple semirings are quite well understood thanks to the seminal paper \cite{BHJK} (with the exception of subsemirings of the positive real numbers), ideal-simple ones remain more mysterious, and so we focus on them.
Their structure can be quite quickly be reduced to that of (para)semifields (see Theorems \ref{classidsimple} and \ref{classem2}):

A semiring $S$ is a \textit{semifield} if moreover there is an element $0$ such that the set of non-0 elements $(S\setminus\{0\},\cdot)$ forms a group and $0\cdot s=0$ for each $s\in S$, and a \textit{parasemifield} if $(S,\cdot)$ is a group.

Specifically, we will be interested in \textit{finitely generated} ideal-simple semirings, motivated by the folklore result  that \textit{if a field is finitely generated as a ring, then it is finite}.

Semiring generalizations of this result have been recently quite intensely studied \cite{JKK, KK, KalKor};
the first of the goals of this article is to summarize the various classification results on finitely generated ideal-simple semirings and (para)semifields, as they have been spread throughout the literature. 

Our main goal is then to focus on the most interesting case of additively idempotent parasemifields. We use their combinatorial classification \cite{Kala15} in terms of rooted forests to obtain new 
results on their minimal numbers of semiring generators. As the classification states that we need to study the abelian groups $(\Z^n,+)$ equipped with suitable partial orders coming from the graph structure of a rooted forest $F$ on $n$ vertices, our proofs entail mostly elementary, but tricky and non-trivial arguments.
Surprisingly, it turns out that the minimal number of generators grows linearly with the depth of $F$ (see Theorem \ref{bounds}). However, determining the precise value seems to be very hard and it is unclear if the answer will depend on the specific structure of $F$, or only on its depth.

These results are interesting not only on their own, but also because additively idempotent parasemifields are term-equivalent with lattice-ordered groups ($\ell$-groups for short, see Section 2 for details). The study of $\ell$-groups is another rich area of great interest \cite{AF, GH, Yang}, and exploiting this connection was already crucial in the previous classification results \cite{Kala15,KalKor} that relied on the classification of Busaniche, Cabrer, and Mundici \cite{BCM}. In fact, in most of the present paper we also use the $\ell$-group notation.
Very notable is also the correspondence with MV-algebras and related topics in logic \cite{BDN-F, BDNF-B, BDN-E,DNG,  GRS, M,  NL}.

As for the contents of this short paper, in Section 2, we summarize the classifications of ideal-simple semirings, of semifields, and of finitely-generated semifields, following \cite{BHJK} and \cite{JK}. 
Their structures in turn depend on parasemifields, and so accordingly in Section 3, we state the classification of parasemifields that are finitely generated as semirings (Theorem \ref{classtree}) 
from \cite{Kala15}: 
Each such parasemifield can be associated with a rooted forest carrying an additive group of integers on each vertex. The second semiring operation $\vee$ is defined as a lexicographic maximum with respect to the forest structure.

Sections 4 and 5 concern the minimal number of generators needed to generate a given parasemifield using the semiring operations. We show that this number is linear in the depth of the rooted forest that represents it (Theorem \ref{bounds}). Among other results, we also give the precise minimal number of generators for the parasemifields corresponding to $\Zn$ equipped with coordinate-wise addition and maximum (Theorem \ref{genzn}; \textit{an elementary reformulation of this surprising result was selected for the shortlist of problems for the International Mathematical Olympiad 2022}). 
We conclude the article with Open Question \ref{que} that offers a possible precise value for the number of generators.

\section*{Acknowledgments}
We thank Miroslav Korbel\' a\v r and Ji\v r\'\i\ \v S\'\i ma for helpful discussions and suggestions.

\section{Preliminaries and basic classifications}

Already in the Introduction we have recalled that a \emph{semiring} $(S,\oplus,\cdot)$ consists of a set $S$
equipped with two binary associative and commutative operations, addition $\oplus$ and multiplication $\cdot$, such that multiplication distributes over addition. Moreover, a semiring $S$ is a \textit{semifield} if there is an element $0$ such that $(S\setminus\{0\},\cdot)$ forms a group and $0 s=0$ for all $s\in S$, and a \textit{parasemifield} if $(S,\cdot)$ is a group; in both cases we denote the unit element $1$ and the inverse $^{-1}$.

A semiring $S$ is \textit{finitely generated} if there are elements $s_1,\dots,s_n$ for some positive integer $n$ such that the smallest subsemiring of $S$ containing  $s_1,\dots,s_n$ equals $S$ itself.
If a semifield is finitely generated as a semiring (i.e., using only the operations $\oplus, \cdot$, but not the inverse $^{-1}$), then we will call it an \textit{fg-semifield}, and similarly in the case of an \textit{fg-parasemifield}.

A semiring $S$ is \textit{additively idempotent} if $s\oplus s=s$ for all $s\in S$. 
Such semirings are studied in tropical mathematics 
where the semiring with its operations is commonly denoted as $(S,\vee, +)$, i.e., $\vee$~denotes the addition and $+$ the multiplication (we will also frequently use this notation).

Of particular interest is the case of additively idempotent parasemifields $(S,\vee,+)$, for we have a term-equivalence with {lattice-ordered groups} ({$\ell$-groups} for short) $(L,+,\vee,\wedge)$. 
Recall that an (abelian) \textit{$\ell$-group} $(L,+,\vee,\wedge)$ is an abelian group
$(L,+)$ that is also a lattice $(L,\vee,\wedge)$ such that $+$ distributes over the lattice operations $\vee,\wedge$. The term-equivalence between additively idempotent parasemifields $(S,\vee,+)$ and $\ell$-groups $(S,+,\vee,\wedge)$ is given by $a\wedge b=-((-a)\vee(-b))$ (and the operations $\vee,+$ staying the same).
For more details see, e.g.,  \cite{AF, GH, W, WW}.

\medskip

\textbf{Convention.} Let us stress that throughout the paper, all semirings, semifields,  parasemifields, and $\ell$-groups are commutative.

\medskip

An \textit{ideal} $I$ in a semiring $S$ is a non-empty subset such that $a\oplus b, s\cdot a\in I$ for all $a,b\in I, s\in S$.
A semiring $S$ is \textit{ideal-simple} if all ideals $I$ in $S$ satisfy $ |I|\leq 1$ or $I=S$.

Recall that for a positive integer $n$, we denote $\Z^n$ the direct product of $n$ copies of $\Z$. We will often consider it as a group or semigroup $(\Z^n,+)$ when equipped with coordinate-wise addition $+$. 
For a vector $\mb v=(v_1,\dots,v_n)\in\Z^n$ and an integer $k$, we will denote 
$k\cdot\mb v$ (or just $k\mb v$) the vector $(kv_1,\dots,kv_n)$.

Now we can summarize the classification results for ideal-simple semirings and their relation to the property of being finitely generated, originally established in \cite{BHJK, JK, KK}. While we do not give the proofs here, they are available in the original articles, or in the thesis \cite{Sima}.

\begin{thm}[{\cite[Theorem 11.2]{BHJK}}] \label{classidsimple}
	Let $S$ be a semiring, $ |S| \geq 3$. Then $S$ is ideal-simple if and only if one of the following cases holds:
	\begin{enumerate}[label=$(\arabic*)$]
		\item $(S,\oplus )$ is isomorphic to the $p$-element cyclic group $(\Z_p,+)$ equipped with zero-multiplication for a prime  $p>3$,
		\item $S$ is a semifield,
		\item $S$ is a parasemifield.
	\end{enumerate}
\end{thm}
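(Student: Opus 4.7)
The plan is to analyze $S$ through the ideal
\begin{equation*}
	I := \big\{ a_1\cdot b_1 \oplus \cdots \oplus a_k\cdot b_k : k \geq 1,\ a_i, b_i \in S \big\}
\end{equation*}
of all finite sums of products, which is closed under $\oplus$ by construction and under multiplication by any $s\in S$ via distributivity. Ideal-simplicity then forces $|I|=1$ or $I=S$, splitting the argument into two main branches.

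\textbf{Case $|I|=1$.} All products equal a single element $z$, and distributivity forces $z\oplus z=z$. So $S$ carries the constant ``zero multiplication'' $a\cdot b = z$, and the ideals of $S$ are precisely the non-empty $\oplus$-subsemigroups of $(S,\oplus)$ that contain $z$. Simplicity becomes a purely semigroup-theoretic condition: any such subsemigroup containing $z$ and some other element is all of $S$. From this I would deduce in turn that $z$ is the additive identity (the alternative, $z$ being additively absorbing, produces a small ideal of the form $\{z,e\}$ that conflicts with $|S|\geq 3$), that $(S,\oplus)$ is the cyclic semigroup generated by any $a\neq z$ (since $\{z,a,2a,3a,\dots\}$ is an ideal), and, by exhibiting the subsemigroup generated by any $k$-th multiple of $a$ as a proper non-trivial ideal when the period is composite, that the period is a prime $p\geq 3$. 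This yields $(S,\oplus)\cong(\Z_p,+)$, and the converse direction is a routine verification.

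\textbf{Case $I=S$.} Every element of $S$ is a finite sum of products. For each $a\in S$, the set $Sa=\{s\cdot a : s \in S\}$ is closed under $\oplus$ (by distributivity) and under multiplication by $S$, so it is an ideal; simplicity gives $|Sa|\leq 1$ or $Sa=S$. Call $a$ \emph{singular} when $|Sa|\leq 1$. If every $a\in S$ were singular, then commutativity would give $a^2=b^2$ for all $a,b$, so all products coincide with a common value $z$; by distributivity $z\oplus z=z$, hence $I=\{z\}$, forcing $|S|=1$ and contradicting the hypothesis. Thus there is a non-singular $a$; the surjectivity $Sa=S$ yields some $e_a$ with $e_a a=a$, and for any $b\in S$ we can write $b=ta$ for some $t$, so $e_a b = t(e_a a)=ta=b$, showing that $e_a$ is a universal multiplicative identity $1$. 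Surjectivity $Sa=S$ then provides an inverse to every non-singular $a$. A short computation with any singular $a$ (using $1\cdot a=a\in Sa=\{a^2\}$) shows that each singular element is multiplicatively absorbing, and two distinct absorbing elements must coincide. Hence at most one singular element, a multiplicative zero, can exist: the outcome is a parasemifield when none is present (case (3)) and a semifield when exactly one is (case (2)).

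The main obstacle I anticipate is in Case $|I|=1$: passing from ``$z$ is the additive identity'' to ``$(S,\oplus)$ is cyclic of prime order'' requires a careful enumeration that rules out every pre-periodic tail and every composite period by exhibiting a proper non-trivial ideal, and this is where $|S|\geq 3$ enters to exclude degenerate small orders. Case $I=S$ is conceptually lighter, as the trick of reading off a universal identity from $b=ta$ together with $e_aa=a$ sidesteps the absence of additive cancellation cleanly; the subsequent dichotomy ``no singular element'' versus ``a single multiplicative zero'' then produces cases (3) and (2) respectively. Collecting the three outputs yields the if-direction of the theorem, while each of the three structures is ideal-simple by direct inspection, giving the converse.
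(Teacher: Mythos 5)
The paper does not prove this theorem: it is imported verbatim from \cite{BHJK} (the authors explicitly defer all proofs in Section 2 to the original articles and to \cite{Sima}), so there is no in-paper argument to compare yours against. That said, your outline is, as far as I can check, a correct and essentially standard route to the classification, and the key objects you introduce do the work you claim: the ideal $I$ of finite sums of products splits the analysis; in the branch $I=S$ the ideals $Sa$ give the identity $e_a=1$, inverses for non-singular elements, and the fact that a singular element is multiplicatively absorbing and hence unique, yielding cases (2) and (3) (you should still add the one-line check that $(S\setminus\{0\},\cdot)$ is closed, i.e.\ that $ab=0$ with $a,b\neq 0$ would collapse $S$); in the branch $|I|=1$ the problem does reduce to additive subsemigroups containing $z$ and does force a cyclic group of prime order.

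Two points in the branch $|I|=1$ deserve attention. First, your dichotomy ``$z$ is the additive identity or $z$ is additively absorbing'' is not automatic; it follows from applying ideal-simplicity to the ideal $z\oplus S$ (additively closed since $z\oplus z=z$): if it is a singleton it equals $\{z\}$ and $z$ is absorbing, otherwise it is all of $S$ and then $z\oplus(z\oplus t)=z\oplus t$ shows $z$ is an identity. Likewise, to exhibit the two-element ideal $\{z,e\}$ in the absorbing sub-case you must first know that $\langle a\rangle$ contains an additive idempotent $e\neq z$; this needs a further application of simplicity (e.g.\ to the ideal $\{z\}\cup\{na:n\geq 2\}$, which forces $a=na$ for some $n\geq 2$, so $\langle a\rangle$ is a finite group not containing the absorbing $z$). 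You flag this as the delicate spot, and it is, but it goes through. Second, your argument yields a prime $p\geq 3$, not $p>3$ as displayed --- and your bound is the correct one: $\Z_3$ with zero multiplication has three elements, its only ideals are $\{0\}$ and itself, and it is neither a semifield nor a parasemifield, so it must fall under case (1). The ``$p>3$'' in the statement appears to be a typo for ``$p\geq 3$''.
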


Let us  further state the complete classification of semifields.

\begin{thm}[{{\cite[Section 12]{BHJK}}, \cite[Theorem 8.15]{JK}}] \label{classem2}
	Let $S$ be a semifield. Then one of the following cases occurs:
	\begin{enumerate}[label=$(\arabic*)$]
		\item $S$ is a field.
		\item $S$ is constructed from a parasemifield $(T,\oplus ,\cdot)$ by adding an element $0$ and letting $0\oplus s=s$ and $0s=0$ for every $s \in S$.
		\item $S$ is constructed from a multiplicative abelian group $(A,\cdot)$ by adding an element $0$ and letting $s\oplus t=0$ and $0s = 0$ for every $s,t \in S$.
		\item $S$ is constructed from a parasemifield $(P,\oplus ,\cdot)$ as follows: Assume that $(P,\cdot)$ is a multiplicative subgroup of an abelian group $(A,\cdot)$ and let $S =A \cup \{0\}$ and $0s=0$ for every $s\in S$. 
		
		The addition is defined for any $x,y \in S$ as follows:
		\begin{align*}
			x\oplus 0 &= 0 \\
			\text{if\ \  }x^{-1}y \notin P, \text{\ then\ \ } x\oplus y &= 0  \\
			\text{if\ \  }x^{-1}y \in P, \text{\ then\ \ }  x\oplus y &= (x^{-1}y \oplus  1)\cdot x.
		\end{align*}
	\end{enumerate}
\end{thm}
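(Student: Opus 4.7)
The plan is to analyze the additive structure on the multiplicative group $A := S\setminus\{0\}$. Since $(A,\cdot)$ is already an abelian group, the classification reduces to understanding how $\oplus$ interacts with it. The starting point, immediate from distributivity and the absorbing property of $0$ under multiplication, is that for any $x,y\in A$ we have $x\oplus y=0$ if and only if $1\oplus x^{-1}y=0$, so the whole additive behaviour is encoded by the ``null set''
\[
N := \{z\in A : 1\oplus z=0\}
\]
together with the map $z\mapsto 1\oplus z$ defined on the complement $P := A\setminus N$. Multiplying $1\oplus z=0$ by $z^{-1}$ and using absorption gives $z^{-1}\oplus 1=0$, so both $N$ and $P$ are stable under inversion.

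First I would split on the size of $N$. If $N=\emptyset$, then $\oplus$ never produces $0$ on $A\times A$, so $(A,\oplus,\cdot)$ is itself a parasemifield and $S$ is obtained from it by adjoining $0$ as an additive identity, giving case $(2)$. If $N=A$, then $x\oplus y=0$ for all $x,y\in A$ and $S$ is built from the bare multiplicative group $(A,\cdot)$ as in case $(3)$. The delicate case is $\emptyset\subsetneq N\subsetneq A$, which must split between the field case $(1)$ and the ``mixed'' case $(4)$.

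To handle this intermediate range I would try to promote $P$ to a subgroup of $A$, or else collapse $S$ to a field. Closure of $P$ under inverses is already established, so the main step is multiplicative closure. I would exploit the distributive identity $(a\oplus 1)(b\oplus 1)=ab\oplus a\oplus b\oplus 1$ for $a,b\in P$, together with associativity of $\oplus$ and the inversion symmetry of $P$, to argue the following dichotomy: either $P$ is closed under multiplication, and hence a subgroup of $A$, in which case the formula $x\oplus y=(x^{-1}y\oplus 1)\,x$ whenever $x^{-1}y\in P$ follows directly from distributivity, $(P,\oplus,\cdot)$ is a parasemifield, and $S$ sits in case $(4)$; or else some pair $a,b\in P$ satisfies $ab\in N$, from which I would manufacture a non-trivial additive inverse inside $A$.

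The hard part will be this dichotomy. In the second branch, once a single additive inverse exists one needs to propagate it to every element of $A$ by translating via the multiplicative group action, thereby turning $S$ into a commutative ring whose non-zero elements already form a multiplicative group, so $S$ is a field and we land in case $(1)$. Controlling when the ``additive degeneracies'' stay confined to the coset structure of a subgroup versus when they proliferate and force a full ring structure is the core of the argument, and is precisely why the four separate cases in the statement are needed.
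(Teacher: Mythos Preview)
The paper does not actually prove this theorem; it is quoted from \cite{BHJK} and \cite{JK} with the explicit remark that the proofs are available in the original articles or in \cite{Sima}. So there is no in-paper argument to compare against, and I can only assess your plan on its own merits. The outline has the right shape, but there is a genuine gap at the very start that, once filled, dissolves the ``hard dichotomy'' you are worried about.

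You never determine how $0$ behaves under~$\oplus$. Setting $e:=0\oplus 1$, one has $0\oplus s=(0\oplus 1)s=es$ for all $s$, and then $(0\oplus 0)\oplus 1=0\oplus(0\oplus 1)$ yields $e=e^{2}$; hence either $e=0$ (so $0$ is additively absorbing) or $e=1$ (so $0$ is additively neutral). This split is what really separates cases (1),(2) from (3),(4), and it should come first:
\begin{itemize}
\item If $0$ is neutral: $N=\emptyset$ is exactly case~(2). If some $z\in N$ exists, then $s\oplus sz=s(1\oplus z)=0$ for every $s$, so $S$ is already a commutative ring whose nonzero elements form a group, i.e.\ a field --- case~(1). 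No analysis of multiplicative closure of $P$ is needed.
\item If $0$ is absorbing: $1\oplus 1=0$ forces $N=A$, because for any $a$ one gets $(1\oplus a)^{2}=1\oplus a\oplus a\oplus a^{2}=(1\oplus a^{2})\oplus a(1\oplus 1)=(1\oplus a^{2})\oplus 0=0$, contradicting $1\oplus a\in A$; this is case~(3). Otherwise $1\in P$, and now your ``or else $ab\in N$'' branch is \emph{impossible}: if $a,b\in P$ but $1\oplus ab=0$, then $(1\oplus a)(1\oplus b)=1\oplus a\oplus b\oplus ab=(a\oplus b)\oplus 0=0$, contradicting $(1\oplus a)(1\oplus b)\in A$. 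So $P$ is automatically a subgroup; one then checks $P$ is $\oplus$-closed (using the same absorption trick together with $1\oplus 1\in P$), and we land in case~(4).
\end{itemize}

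Your proposed route --- detecting the field case through a failure of multiplicative closure of $P$ and then ``manufacturing an additive inverse'' --- is misdirected: the equation $1\oplus ab=0$ only yields an additive inverse of $1$ when $0$ is already known to be neutral, and in that regime any single element of $N$ does the job directly, without reference to $P$. Conversely, when $0$ is absorbing, multiplicative closure of $P$ never fails, so that branch is vacuous. Insert the $e^{2}=e$ step at the top and the four cases fall out without the dichotomy you flagged as hard.
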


Note that in case (4) above (as well as in Theorem \ref{classemfg}(4) below) we allow $P$ to be the trivial one-element parasemifield -- this gives precisely the semifields denoted as $V(A)$ in \cite[Theorem 8.15]{JK}.

The classification above tells us that semifields arise from well-known structures (fields and groups) or from parasemifields. 

By Theorem \ref{classidsimple}, we see that every finitely generated, ideal-simple semiring (except for the trivial case (1)) is an fg-semifield or fg-parasemifield.

Further, it is not hard to check that the structure of fg-semifields nicely corresponds to the classification from Theorem \ref{classem2}.

\begin{thm}[{\cite[Section 4]{KK}, \cite[Section 4]{JK}}] \label{classemfg}
	Let $S$ be an \emph{fg}-semifield. Then one of the following cases occurs:
	\begin{enumerate}[label=$(\arabic*)$]
		\item $S$ is a \emph{finite} field.
		\item $S$ is constructed from an \emph{fg}-parasemifield $P$ by adding an element $0$ and letting $0\oplus s=s$ and $0s=0$ for every $s \in S$.
		\item $S$ is constructed from a \emph{finitely generated} multiplicative abelian group $(A,\cdot)$ by adding an element $0$ and letting $s\oplus t=0$ and $0s = 0$ for every $s,t \in S$.
		\item $S$ is constructed from an \emph{fg}-parasemifield $(P,\oplus ,\cdot)$ as follows. Let $(P,\cdot)$ be a subgroup of a \emph{finitely generated} abelian group $(A,\cdot)$ and let $S = A \cup \{0\}$ and $0s=0$ for every $s\in S$. 
		
		The addition is defined for any $x,y \in S$ as follows:
		\begin{align*}
			x\oplus 0 &= 0 \\
			\text{if\ \  }x^{-1}y \notin P, \text{\ then\ \ } x\oplus y &= 0  \\
			\text{if\ \  }x^{-1}y \in P, \text{\ then\ \ }  x\oplus y &= (x^{-1}y \oplus  1)\cdot x.
		\end{align*}
	\end{enumerate}	
\end{thm}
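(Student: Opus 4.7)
My plan is to combine Theorem~\ref{classem2}---which classifies all semifields into four structural cases regardless of finite generation---with separate arguments, one per case, that transfer the hypothesis of finite generation from $S$ to the building blocks $P$ and $A$. I first apply Theorem~\ref{classem2} to place $S$ in one of the four cases, and then I verify the corresponding case of the present theorem.

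Case~(1) is immediate: if $S$ is a field, then any semiring generating set is also a ring generating set (the ring operations include the semiring ones), so $S$ is finitely generated as a ring, and the folklore result quoted in the introduction forces $S$ to be finite. For Cases~(2) and~(3), I would fix a semiring generating set of $S$, include $0$ in it if needed, and discard $0$. The remaining generators lie in $P$, respectively in $A$, and adding $0$ back contributes nothing since $0$ is neutral for $\oplus$ and absorbing for $\cdot$. In Case~(2) the non-zero generators must then generate $P$ as a semiring, so $P$ is an fg-parasemifield. In Case~(3) every non-trivial sum equals $0$, so the subsemiring collapses multiplicatively to the submonoid spanned by the non-zero generators; since this submonoid must equal $A$ and $A$ is a group, $A$ is finitely generated as an abelian group.

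The core difficulty is Case~(4), where the addition is partial and genuinely interacts with the coset structure of $P$ inside $A$. Starting from semiring generators $s_1,\dots,s_n \in A$ (one may discard any occurrence of $0$, since sums already produce $0$ via $x \oplus y = 0$ when $x^{-1}y \notin P$), I would analyze the formula $x \oplus y = (x^{-1}y \oplus 1)\cdot x$ to show that every non-zero element of the generated subsemiring factors as a product of the $s_i$ times a finite list of auxiliary elements of $P$ of the form $p \oplus 1$, where $p$ is built from the ratios $s_i^{-1}s_j \in P$ by iterated application of the same formula. Requiring this subsemiring to equal $A \cup \{0\}$ then yields simultaneously that the $s_i$ generate $A$ as an abelian group and that the ratios $s_i^{-1}s_j \in P$ together with the auxiliary list of $p\oplus 1$ values generate $P$ as a parasemifield. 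The main obstacle is precisely this bookkeeping---tracking exactly which elements of $P$ are forced to appear in the generated set, and ensuring the list remains finite---which is the step carried out in detail in \cite{KK, JK}.
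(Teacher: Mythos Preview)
The paper does not give its own proof of this theorem; it merely states the result with attribution to \cite{KK,JK} after remarking that ``it is not hard to check that the structure of fg-semifields nicely corresponds to the classification from Theorem~\ref{classem2}.'' So there is no detailed argument here to compare against. Your outline follows exactly the route the paper suggests---apply Theorem~\ref{classem2} and then push finite generation down to the constituents---and Cases~(1)--(3) are handled correctly.

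For Case~(4), your sketch is on the right track but the step you flag as ``the main obstacle'' really is one, and your description of it is not quite accurate. The issue is not merely bookkeeping: you need to show that $P$ itself is finitely generated \emph{as a semiring}, and the elements $p\oplus 1$ that arise when unwinding $x\oplus y=(x^{-1}y\oplus 1)x$ are not a finite list a priori---they depend on all of $P$, not just on the generators. The actual argument in \cite{JK} shows that the finitely many ratios $s_i^{-1}s_j$ lying in $P$ generate $P$ as a semiring (using that any nonzero element of $S$ is a product of the $s_i$'s times an element of $P$, and that $P$ is closed under the semiring operations of $S$); the $p\oplus 1$ factors then lie in the subsemiring generated by these ratios automatically. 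Your formulation suggests you would need to add the $p\oplus 1$ values as extra generators, which would leave finiteness unresolved. If you want a self-contained proof rather than a citation, this is the point to sharpen.
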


The preceding theorems essentially reduce the classification of finitely generated ideal-simple semiring to that of fg-parasemifields.
While not much appears to be known about parasemifields in general, the case of fg-parasemifields is much better understood, as we discuss in the next section.

\section{Finitely generated parasemifields}

To proceed to the main topic of this paper, let us 
present the classification of fg-parasemifields. 
First, by a non-trivial theorem of Kala and Korbel\' a\v r, we can restrict ourselves to the case of additively idempotent parasemifields.

\begin{thm}[{\cite[Theorem 4.5]{KalKor}}]\label{th:ai}
	Let $S$ be an fg-para\-semi\-field. Then $S$ is additively idempotent. 
\end{thm}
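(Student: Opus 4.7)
The plan is to reduce additive idempotence of $S$ to the single identity $1 \oplus 1 = 1$, and then, assuming the contrary, to produce a copy of $\Q^+$ inside $S$ that is incompatible with finite generation.

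For the reduction, distributivity gives $s \oplus s = s \cdot (1 \oplus 1)$, so $1 \oplus 1 = 1$ implies $s \oplus s = s$ for every $s \in S$, and the converse is immediate. It therefore suffices to rule out the possibility $a := 1 \oplus 1 \neq 1$.

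Assume such an $a$ exists. Writing $n_S := 1 \oplus \cdots \oplus 1$ ($n$ summands), each $n_S$ is multiplicatively invertible, since $(S, \cdot)$ is a group. I would then split into two cases. If the map $n \mapsto n_S$ is injective, then the elements $m_S \cdot n_S^{-1}$ for $m, n \in \N$ form a subsemiring of $S$ isomorphic to $\Q^+$. If instead $n_S = m_S$ for some $n > m$, then multiplying by $m_S^{-1}$ yields a relation of the form $(n{-}m)_S \cdot m_S^{-1} \oplus 1 = 1$, which in the parasemifield setting (no zero, every element a unit) forces further collapses; tracing these through again produces either a direct contradiction with $a \neq 1$, or an embedded rational substructure. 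Either outcome gives $\Q^+ \hookrightarrow S$ or a contradiction on the spot.

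The main obstacle, and the step I expect to be the most delicate, is to show that no fg-parasemifield can contain $\Q^+$. If $s_1, \ldots, s_k$ generate $S$ as a semiring, then the multiplicative subgroup of $(S,\cdot)$ generated by $s_1,\dots,s_k$ is a finitely generated abelian group, hence has finite rank, so the set of primes appearing in the ``denominators'' of elements of $\Q^+ \subseteq S$ that are reachable from $s_1,\dots,s_k$ by multiplications alone is finite. The real work is to argue that the operation $\oplus$ cannot enlarge this set of primes: a $\oplus$-sum of elements from a multiplicative subgroup $G \leq (S,\cdot)$ need not lie in $G$, so one must introduce a suitable invariant (for instance, tracking primes in the $\Q^+$-identification of a finitely generated multiplicatively saturated subset, or passing to a valuation on the image of $\Q^+$) and verify that $\oplus$-closure grows this invariant in a controlled way. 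Once such an invariant is in place, the presence of $1/p \in \Q^+ \subseteq S$ for every prime $p$ contradicts finite generation. Crucially, this entire argument must be carried out inside the semiring, because the $\ell$-group translation is only available once additive idempotence has already been proved.
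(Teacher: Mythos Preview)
The paper does not prove this theorem; it is quoted from \cite{KalKor} as a ``non-trivial theorem'' with no argument given beyond the citation. So there is no in-paper proof to compare your proposal against, and your outline is in fact the strategy of \cite{KalKor}: establish the dichotomy ``$S$ is additively idempotent or $\Q^+\hookrightarrow S$,'' then show that an fg-parasemifield cannot contain $\Q^+$. Your reduction to $1\oplus 1=1$ is correct, and the injective branch does embed $\Q^+$.

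Two genuine gaps remain, however. In the non-injective branch, ``tracing these through'' is not an argument. One route: from $n_S=m_S$ with $n>m$ one gets $(k+r)_S=k_S$ for all $k\ge m$ with $r=n-m$, so $\{k_S:k\ge 1\}$ is finite; multiplication by $2_S$ is an injective self-map of this finite set, hence $(2^N)_S=1$ for some $N\ge 1$; this is again an instance of $n_S=m_S$ but now with $m=1$, giving period $2^N-1$ from $k=1$, and then $(2(2^N-1))_S=2_S\cdot(2^N-1)_S=(2^N-1)_S$ forces $2_S=1$. This is short once found, but it is not supplied by your sketch.

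The second gap is the serious one. Your proposed exclusion of $\Q^+$ does not work as written: the generators $s_1,\dots,s_k$ lie in $S$, not in $\Q^+$, so ``primes appearing in their denominators'' has no meaning, and you yourself observe that there is no reason $\oplus$ should respect any multiplicative invariant built from the subgroup $\langle s_1,\dots,s_k\rangle\le (S,\cdot)$. Saying ``one must introduce a suitable invariant'' is a statement of the problem, not a solution to it. In \cite{KalKor} this step is the substance of the paper and requires considerably more than a prime-tracking heuristic. Your closing remark---that the $\ell$-group translation is unavailable until idempotence is proved---is exactly right and is precisely why the result is hard.
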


Therefore, it suffices to study only additively idempotent fg-semifields, which is the case considered by Kala \cite{Kala15}. Combining his results with the preceding Theorem \ref{th:ai}, we will be able to state Theorem \ref{classtree} and Corollary \ref{gengroup} for general fg-parasemifields without the idempotency assumption.

However, first we need to associate a parasemifield $G(T,v)$ to a rooted tree $(T,v)$ and to extend this notion to rooted forests. 

Recall that a \textit{rooted tree} $(T,v)$ is a (finite) un-oriented graph $T$ without cycles together with a highlighted vertex $v$, called the \textit{root}. A \textit{rooted forest} $(F,R)$ consists of a graph $F = T_1 \sqcup \dots \sqcup T_k$ that is the disjoint union of finitely many rooted trees $(T_i,v_i)$ with the set of roots $R = \{v_1, \dots, v_k\}$. For a graph $G$, we denote $V(G)$ and $E(G)$ its sets of vertices and edges.
A vertex $v$ in a rooted forest is a \textit{leaf} if $v\not\in R$ and its degree is 1 (i.e., there is precisely 1 edge containing $v$), or if $v\in R$ and its degree is 0.

Two rooted forests are \textit{isomorphic} if there is a bijection between their sets of vertices that preserves the graph structure and permutes the sets of roots.

The \textit{depth} of a vertex $w\in V(F)$ in a rooted forest $(F,R)$ is the largest $k$ such that there exists a path $v = v_1, v_2, \dots, v_k = w$ (i.e., a sequence of distinct vertices such that $(v_i,v_{i+1})$ is an edge for each $i$) from some root $v\in R$ to the vertex $w$. The \textit{depth} of a rooted forest is the maximum of the depths of its vertices.

For a positive integer $n$, we denote $[n]=\{1,2,\dots,n\}$.

\begin{defn}
	Let $(T,v)$ be a rooted tree on $n=|V(T)|$ vertices. We attach a copy of the set of integers $\Z_w$ to each vertex $w \in V(T)$ and define the set $$G(T,v)=\prod_{w\in V(T)}\Z_w=\Z^n$$ (of course, the second equality above depends on fixing a bijection of $V(T)$ with $[n]$, as we will usually do). 
	We refer to the elements of $G(T,v)$ as integer-valued vectors from $\Zn$, each coordinate corresponding to a vertex in $(T,v)$.
	
	Let us now define semiring operations $\vee,+$ on $G(T,v)$. The multiplicative group $(G(T,v), +)$ is given by the coordinate-wise addition in the group $(\Zn ,+)$ (which does not depend on structure of the tree $(T,v)$).
	
	To define the semiring addition $\vee$, let $\mathbf{g}=(g_w), \mathbf{h}=(h_w)$ be two elements from $G(T,v)$. 
	We define $\mathbf{g} \vee \mathbf{h}=(({g} \vee {h})_w)$ as follows:
	For a vertex $w \in V(T)$, let $v = v_1, v_2, \dots, v_k = w$ be the  unique path from the root $v$ to the vertex $w$. If ${g}_{v_i} = {h}_{v_i}$ for all $i \in [k]$, we set $({g} \vee {h})_w = {g}_w = {h}_w$. Otherwise, let $i$ be the smallest index such that ${g}_{v_i} \neq {h}_{v_i}$ and define
	$$({g} \vee {h})_w =
	\begin{cases}
		{g}_{w}  & \text{ if } {g}_{v_i} > {h}_{v_i}\\
		{h}_{w} & \text{ if } {g}_{v_i} < {h}_{v_i}.
	\end{cases}$$
\end{defn}

We can naturally extend the definition to rooted forests.

\begin{defn}
	Let $(F,R)$ be a rooted forest, $F = T_1 \sqcup \dots \sqcup T_k$, with the set of roots $R = \{v_1, \dots, v_k\}$. We define the associated parasemifield $(G(F,R),\vee,+)$ as the direct product of the parasemifields $(G(T_i, v_i),\vee,+)$.
\end{defn}

It turns out that every fg-parasemifield arises from a rooted forest in this way, as the following theorem shows.

\begin{thm}[{\cite[Theorem 4.1]{Kala15}, {\cite[Theorem 4.5]{KalKor}}}] \label{classtree}
	Let $(S,\oplus ,\cdot)$ be an fg-parasemifield. Then there is a rooted forest $(F,R)$ (unique up to isomorphism) such that $(S,\oplus ,\cdot) \simeq (G(F,R),\vee,+)$. 
\end{thm}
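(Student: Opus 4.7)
\begin{myproof}

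The plan is to reduce to a classification of certain $\ell$-groups via the term-equivalence with additively idempotent parasemifields, and then to identify those $\ell$-groups combinatorially as coming from a rooted forest. First, by Theorem~\ref{th:ai}, every fg-parasemifield is automatically additively idempotent, so we may write it as $(S,\vee,+)$ and pass to the associated $\ell$-group $(L,+,\vee,\wedge)$ with $a\wedge b=-((-a)\vee(-b))$. The task becomes showing that every such $\ell$-group $L$ generated as a semiring by finitely many elements $s_1,\dots,s_n$ has the form $G(F,R)$.

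Next I would show that the abelian group $(L,+)$ is free of finite rank, i.e., $L\cong\Z^n$. Torsion-freeness is automatic for $\ell$-groups; finite rank comes from the semiring finite-generation hypothesis: every element of $S$ is a join $\bigvee_{j}\bigl(\sum_{i} k_{ij}s_i\bigr)$ of $\Z_{\geq 0}$-linear combinations of the generators, and the existence of multiplicative inverses (which must themselves be expressible as such joins) forces the rank to be at most $n$. With $L$ identified as $\Z^n$, the parasemifield structure is determined by its positive cone $L^+=\{x\in L:x\vee 0=x\}$, a submonoid of $\Z^n$.

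The core of the argument is to show that $L^+$ is precisely the cone of $G(F,R)$ for some rooted forest $(F,R)$. Here I would invoke the Busaniche--Cabrer--Mundici classification~\cite{BCM} to realize $L$ as an $\ell$-subgroup of $\R^n$ cut out by a hyperplane arrangement, and then use the stronger semiring (rather than $\ell$-group) generation hypothesis to force those hyperplanes to have integer equations arranged in a hierarchical pattern that matches a rooted forest. A direct verification then gives $S\simeq G(F,R)$. For uniqueness, one reconstructs $(F,R)$ intrinsically from $S$: the number of vertices equals the rank of $L$; connected components of $F$ correspond to indecomposable direct $\ell$-group factors of $L$; and inside each component the parent--child relation is recovered from the hierarchy of lex-dominance among coordinates detected by the $\vee$-operation.

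The main obstacle is this rigidity step -- showing that semiring finite generation, which allows only $\vee$ and $+$ and neither $\wedge$ nor negation, is restrictive enough to force the specific tree-shaped arrangement. The key constraint is that joins of nonnegative-integer combinations of the $s_i$ must already cover all of $L$, including the negatives of the generators themselves, and this sharply limits which lattice structures on $\Z^n$ are achievable.

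\end{myproof}
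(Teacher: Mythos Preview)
The paper does not actually prove this theorem: it is stated with citations to \cite[Theorem~4.1]{Kala15} and \cite[Theorem~4.5]{KalKor} and no proof is given here. Your outline is broadly in the spirit of the original argument in \cite{Kala15}, which (as the Introduction notes) does rely on the Busaniche--Cabrer--Mundici classification \cite{BCM}, so the overall strategy you describe---reduce to additively idempotent via Theorem~\ref{th:ai}, pass to $\ell$-groups, invoke \cite{BCM}, then isolate the forest structure from the semiring-generation constraint---is the right shape.

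That said, your sketch contains a concrete error and a genuine gap. The error is in the finite-rank step: you assert that if $S$ is generated as a semiring by $s_1,\dots,s_n$, then the rank of $(L,+)$ is at most $n$. This is false, and the present paper itself furnishes counterexamples: by Theorem~\ref{genzn}, $G(\isn)$ has multiplicative group $\Z^n$ of rank $n$ but is generated as a semiring by only $3$ elements for every $n\geq 3$. So the number of semiring generators does not bound the rank, and your heuristic (``inverses must be expressible as joins of nonnegative combinations, forcing rank $\leq n$'') cannot be correct. What one actually needs is only that the rank is \emph{finite}, and this requires a different argument.

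The gap is the step you yourself flag as ``the main obstacle'': passing from the BCM description (an $\ell$-subgroup of $\R^n$ cut out by a hyperplane arrangement) to the specific rooted-forest form $G(F,R)$. You say the semiring-generation hypothesis ``forces those hyperplanes to have integer equations arranged in a hierarchical pattern that matches a rooted forest,'' but this is precisely the substantive content of \cite{Kala15}, and nothing in your outline indicates how to carry it out. Acknowledging a difficulty is not the same as resolving it; as written, this paragraph is a restatement of the conclusion rather than an argument toward it.
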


This classification result has an immediate corollary 

\begin{coro}[{\cite[Corollary 4.6]{KalKor}}]\label{gengroup}
	Let $S$ be an fg-parasemifield. Then $S$ is finitely generated as a multiplicative semigroup.
\end{coro}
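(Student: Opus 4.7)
The plan is to reduce immediately to the concrete description supplied by Theorem \ref{classtree}. By that classification, once we have an fg-parasemifield $S$, we get an isomorphism $S \simeq G(F,R)$ for some rooted forest $(F,R)$ on, say, $n=|V(F)|$ vertices. Under this isomorphism the multiplicative operation $\cdot$ of $S$ corresponds to the coordinate-wise addition $+$ on $G(F,R) = \Z^n$. In particular, the multiplicative group $(S,\cdot)$ is isomorphic to $(\Z^n,+)$, and the tree/forest structure plays no role whatsoever for this question; it only governs the semiring addition $\vee$, which we are allowed to ignore here.

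It then suffices to exhibit a finite generating set of $(\Z^n,+)$ viewed as a semigroup. I would take the $2n$ vectors $\pm e_1,\dots,\pm e_n$, where $e_1,\dots,e_n$ is the standard basis. Any $(a_1,\dots,a_n)\in\Z^n$ can be written as $\sum_{i:a_i>0} a_i\,e_i + \sum_{i:a_i<0} |a_i|\,(-e_i)$, a nonempty sum from this set as soon as some $a_i\ne 0$, and the zero vector can be expressed as $e_1+(-e_1)$. Transporting these $2n$ generators back through the isomorphism of Theorem \ref{classtree} yields a finite multiplicative generating set of $S$.

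There is really no obstacle here: the corollary is immediate once Theorem \ref{classtree} is in hand, because the whole difficulty of the classification has already been absorbed into identifying the multiplicative group with the finitely generated free abelian group $\Z^n$. The short proof below should reflect exactly this two-line reasoning.
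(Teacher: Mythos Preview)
Your proof is correct and follows essentially the same approach as the paper: apply Theorem~\ref{classtree} to identify the multiplicative semigroup of $S$ with $(\Z^n,+)$, and then observe that the latter is finitely generated as a semigroup. The only cosmetic difference is that the paper appeals to Proposition~\ref{genplus} (which gives $n+1$ generators) rather than writing down the $2n$ generators $\pm e_i$, but this does not constitute a different route.
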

\begin{proof}
	Theorem \ref{classtree} gives us that $S \simeq G(F,R) $ for some rooted forest $(F,R)$. The multiplicative group $(S,\cdot)$ is thus isomorphic to $(\Zn ,+)$ for $n = |V(F)|$. The corollary follows from the fact that $\Zn$ is clearly finitely generated as an additive semigroup (for more details, see Proposition \ref{genplus}).
\end{proof}

As a result, we obtain the following corollary, which might be quite surprising. 

\begin{coro} 
	Let $S$ be an ideal-simple semiring that is finitely generated. Then $S$ is finitely generated as a multiplicative semigroup.
\end{coro}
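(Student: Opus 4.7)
The plan is a straightforward case analysis driven by the classification theorems already collected in the preceding sections, so that the corollary becomes essentially a bookkeeping exercise once Corollary \ref{gengroup} is in hand. First, I would dispose of the trivial case $|S|\le 2$, where $S$ is finite and hence trivially finitely generated as a multiplicative semigroup. For $|S|\ge 3$, Theorem \ref{classidsimple} splits the problem into three cases: (1) $S$ is the $p$-element zero-multiplication semiring, (2) $S$ is a semifield, or (3) $S$ is a parasemifield. Case (1) is again finite, so there is nothing to do, and case (3) is exactly the content of Corollary \ref{gengroup}.

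The only case requiring some care is (2). Since $S$ is a finitely generated semiring, it is an fg-semifield, so Theorem \ref{classemfg} describes it as one of four sub-types. In sub-type (i) $S$ is a finite field, again trivial. In sub-type (ii) $S = P\cup\{0\}$ where $P$ is an fg-parasemifield; I would apply Corollary \ref{gengroup} to $P$ to obtain a finite multiplicative generating set $g_1,\dots,g_r$ of $P$, and then observe that $\{0, g_1,\dots,g_r\}$ generates $(S,\cdot)$ as a semigroup, since $0$ is absorbing and accounts for every product involving $0$. In sub-types (iii) and (iv) the multiplicative structure is $S = A\cup\{0\}$ for a finitely generated abelian group $(A,\cdot)$; the key (easy) observation is that if $a_1,\dots,a_r$ generate $A$ as a group then $\{a_1,a_1^{-1},\dots,a_r,a_r^{-1}\}$ generates $A$ as a semigroup (any element writes as a product of these elements, and the identity arises as $a_1\cdot a_1^{-1}$), and then adjoining $0$ gives a generating set for $(S,\cdot)$.

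The main (indeed only) substantive ingredient is Corollary \ref{gengroup}, which in turn relies on the nontrivial combinatorial classification of Theorem \ref{classtree}. Everything else in the argument is routine: an absorbing zero contributes no complications, and a finitely generated abelian group is automatically finitely generated as a semigroup once one adjoins inverses of group generators. Thus I expect the whole proof to be only a few lines, organized as a case split matching the enumeration in Theorems \ref{classidsimple} and \ref{classemfg}.
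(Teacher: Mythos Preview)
Your proposal is correct and follows essentially the same route as the paper's own proof: both arguments run the case split from Theorem \ref{classidsimple}, invoke Corollary \ref{gengroup} for parasemifields, and then use Theorem \ref{classemfg} to reduce the semifield case to a finite structure, an fg-parasemifield plus an absorbing zero, or a finitely generated abelian group plus an absorbing zero. You are slightly more explicit than the paper in two places (handling $|S|\le 2$ and spelling out that a finitely generated abelian group is finitely generated as a semigroup via $\{a_i,a_i^{-1}\}$), but these are cosmetic refinements rather than a different approach.
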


\begin{proof}
	First, from the classification of ideal-simple semirings (Theorem \ref{classidsimple}), we have that $S$ is either isomorphic to $\Z_p$ with zero-multiplication (which is finite and thus finitely generated as a multiplicative semigroup), or an fg-parasemifield or an fg-semifield. 
	
	Corollary \ref{gengroup} states that fg-parasemifields are finitely generated as multiplicative semigroups, which also implies that fg-semifields of type (2) from Theorem \ref{classemfg} are finitely generated as well. The statement also holds for fg-semifields of the three remaining types, as they are either finite (type (1)) or obtained by adding one element to a finitely generated abelian group (types (3) and (4)).
\end{proof}

\section{Generators for isolated vertices}
In the previous section we discussed that every fg-parasemifield $S$ corresponds to a rooted forest $(F,R)$ (in the sense that $S \simeq G(F,R)$). 
From now on we will thus work with the parasemifields $(G(F,R),\vee,+)$ and accordingly denote the semiring multiplication as $+$, inverse as $-$, neutral element as $0$, and addition as $\vee$, i.e., we will use the $\ell$-group notation.

We turn our interest to determining the minimal number of generators needed to generate $S$ as a semiring, i.e., using only the multiplication $+$ and addition $\vee$, but not the inverse $-$ (nor the neutral element $0$). 

One of our main results will be Theorem \ref{bounds} saying that the minimal number of semiring generators of an fg-parasemifield $S$ is linear in the depth of the corresponding rooted forest $(F,R)$.

In this section, we start with 
some preliminary observations, and then we determine the number of generators in the case of forests without any edges.

\begin{notation} The minimal number of semiring generators of the parasemifield $G(F,R)$ will be denoted  $m(F,R)$.
\end{notation}

The multiplicative group of the parasemifield $G(F,R)$ is $(\Zn ,+)$ (where + is the usual addition + taken coordinate-wise). Thus, it is quite useful to determine the minimal number of vectors needed to generate $\Zn$ as an additive semigroup.

We will denote $\mb{e}_i\in\Zn$ the vector having 1 at the $i$th coordinate and 0 everywhere else, and $\mb 0=(0,\dots,0)$.
For $\mb{u}\in \Zn$, we usually denote its coordinates as $\mb u=(u_1,\dots,u_n)$. Let us call $\mb{u}$ a \textit{positive vector} if ${u}_1>0,\dots,{u}_n>0$, and similarly a \textit{negative vector} if ${u}_1<0,\dots,{u}_n<0$.

\begin{obs} \label{n+1}
	Let $n$ be a positive integer and $\mb{u}\in \Zn$ a negative vector. Then the set $\{\mb{e}_1, \dots \mb{e}_n, \mb{u}\}$ generates the semigroup $(\Zn,+)$.
\end{obs}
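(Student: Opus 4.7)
The plan is to show that every $\mathbf{v} \in \mathbb{Z}^n$ can be written as a nonnegative integer combination of $\mathbf{e}_1, \dots, \mathbf{e}_n, \mathbf{u}$ (with at least one generator appearing), by using $\mathbf{u}$ as a ``shift'' into the positive orthant and then correcting coordinate-wise with the standard basis vectors.

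Concretely, for a fixed target vector $\mathbf{v} = (v_1, \dots, v_n) \in \mathbb{Z}^n$, I would choose a positive integer $k$ large enough that $v_i - k u_i \geq 0$ for every $i \in [n]$. Since each $u_i$ is a negative integer, we have $|u_i| \geq 1$, so it suffices to take
\[
k = \max\bigl(1,\ -v_1,\ -v_2,\ \dots,\ -v_n\bigr);
\]
with this choice, $v_i - k u_i = v_i + k|u_i| \geq v_i + k \geq 0$ for each $i$. The identity
\[
\mathbf{v} \;=\; k \mathbf{u} \;+\; \sum_{i=1}^{n} (v_i - k u_i)\,\mathbf{e}_i
\]
then expresses $\mathbf{v}$ as a sum of copies of the generators, with the coefficient of $\mathbf{u}$ satisfying $k \geq 1$ and each remaining coefficient nonnegative, so the right-hand side is a genuine element of the subsemigroup generated by $\{\mathbf{e}_1, \dots, \mathbf{e}_n, \mathbf{u}\}$.

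There is essentially no obstacle here; the statement is an elementary linear-combinations argument, and the only care needed is to ensure that $k$ is chosen so the sum is a non-empty sum of generators (hence the $\max(1, \cdot)$), which matters because we are working in a semigroup without an identity element.
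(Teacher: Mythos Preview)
Your proof is correct and follows essentially the same approach as the paper: choose a positive integer $k$ so that $\mathbf{v}-k\mathbf{u}$ has nonnegative coordinates, then write $\mathbf{v}=k\mathbf{u}+\sum_i(v_i-ku_i)\mathbf{e}_i$. The only cosmetic difference is that the paper picks $k$ large enough to make all the $v_i-ku_i$ strictly positive, whereas you allow them to be zero and instead take care to ensure $k\geq 1$; both variants are valid since at least one generator appears.
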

\begin{proof}
	Let us take any vector $\mb{v} \in \Zn$. Since $\mb{u}$ is a negative vector, we can find a positive integer $k$ such that $\mb{w} =(w_1,\dots,w_n) = \mb{v} - k \cdot \mb{u}$ is a positive vector. Then $\mb{v} = \mb{w} + k \cdot \mb{u} = w_1 \cdot \mb{e}_1 + \dots + w_n \cdot \mb{e}_n + k \cdot \mb{u}$ for positive integers $k, w_1, \dots, w_n$, as we wanted to show.
\end{proof}

\begin{claim} \label{genplus}
	The minimal number of semigroup generators of $(\Zn,+)$ is $n+1$.
\end{claim}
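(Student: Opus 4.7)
The statement has two halves: an upper bound (realized by a construction) and a lower bound (a structural obstruction). The upper bound is already available: Observation~\ref{n+1} exhibits the generating set $\{\mb{e}_1,\dots,\mb{e}_n,\mb{u}\}$ of size $n+1$ whenever $\mb u$ is a negative vector, so $n+1$ semigroup generators always suffice.

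The content of the proposition is therefore the lower bound: \emph{no} $n$ vectors can generate $(\Zn,+)$ as a semigroup. My plan is a dimension-counting argument. Suppose for contradiction that $\mb{v}_1,\dots,\mb{v}_n\in\Zn$ generate $(\Zn,+)$ semigroup-theoretically. In particular, the zero vector $\mb 0$ must itself be expressible as a non-empty non-negative integer sum, say $\mb 0 = c_1 \mb v_1 + \dots + c_n \mb v_n$ with $c_i \in \Z_{\geq 0}$ and $\sum_i c_i \geq 1$ (the same nonempty-sum reading as used in the proof of Observation~\ref{n+1}). This is a non-trivial linear relation over $\R$ among the $\mb v_i$, so they are linearly dependent in $\R^n$.

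Consequently, the $\mb v_i$ span a proper subspace $W\subsetneq \R^n$, and hence every non-negative integer combination of them lies in $W$. But $\Zn$ contains the standard basis $\mb e_1,\dots,\mb e_n$, which is linearly independent over $\R$ and therefore cannot lie entirely in any proper subspace $W$. This contradicts the assumption that the $\mb v_i$ generate all of $\Zn$.

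The only subtle point -- and what I would double-check most carefully -- is the convention that the element $\mb 0$ itself must be written as a non-empty sum of generators (since $(\Zn,+)$ is being treated here as a semigroup rather than a monoid); this is exactly the convention used in the proof of Observation~\ref{n+1}, where the choice $\mb v = \mb 0$ is realized as $k\mb u + \sum w_i\mb e_i$ with $k,w_i$ positive integers. Beyond that, the dimension argument is routine and I do not anticipate any further difficulty.
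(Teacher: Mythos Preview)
Your argument is correct and follows essentially the same route as the paper: obtain a non-trivial $\Z$-linear relation among the putative $n$ generators, and contrast this with the fact that those generators must span $\Q^n$ (or $\R^n$), contradicting linear independence of a basis. The only minor difference is how the non-trivial relation is produced: you express $\mb 0$ directly as a non-empty non-negative combination (which, as you correctly flag, relies on the semigroup-not-monoid convention), whereas the paper expresses $\mb e_1$ and $-\mb e_1$ separately and adds the two expressions, which yields the relation without invoking that convention.
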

\begin{proof}
	Lemma \ref{n+1} gives us a set of $n+1$ generators, and so it suffices to show that any $n$ vectors do not generate $\Zn$. For contradiction, suppose that a set $V = \{\mb{v}_1,\dots,\mb{v}_n \}$ generates $\Zn$. In particular, we can find non-negative integers $a_i,b_i$ such that: 
	$$a_1 \cdot \mb{v}_1 + \dots + a_n \cdot \mb{v}_n = \mb{e}_1,\ \ \  b_1 \cdot \mb{v}_1 + \dots + b_n \cdot \mb{v}_n = -\mb{e}_1.$$
Adding these equations, we get a non-trivial linear combination expressing the zero vector
	\begin{align} \label{notrivialcomb}
		(a_1+b_1) \cdot \mb{v}_1 + \dots + (a_n+b_n) \cdot \mb{v}_n = \mb{0}.
	\end{align}
	
	It is easy to see that $V$ generates the vector space $\Q^{n}$ over $\Q$. Because $V$ consists of $n$ vectors and the dimension of $\Q^{n}$ over $\Q$ is $n$, it follows that $V$ is a basis of $\Q^{n}$. But we have found a non-trivial linear combination \eqref{notrivialcomb} that expresses the zero vector,  showing that $V$ is not linearly independent, which is a contradiction.
\end{proof}

In order to bound $m(F,R)$, we will start with the base case when the depth of $F$ equals $1$. We thus consider rooted forests $(F,R)$ consisting of $n$ isolated vertices (then clearly $R = F$, because each tree component consists of exactly one vertex). We will denote such forests by $ \isn$ and give the exact value of $m( \isn)$ in the rest of this section. 

In other words, we are looking for a minimal set $X$ of vectors from $\Zn$ such that $X$ generates all elements of $\Zn$ using addition and maximum (both applied coordinate-wise). We will start with the easy case $n \leq 2$.

\begin{claim} \label{genz12}
	Let $n \in \{1,2\}$. Then $m( \isn) = 2$.
\end{claim}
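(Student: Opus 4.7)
The plan is to prove the two inequalities $m(\isn)\le 2$ and $m(\isn)\ge 2$ separately.

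For the lower bound, I would argue by sign preservation. For each coordinate $i$, both operations $+$ and $\vee$ preserve the sign of the $i$-th coordinate: sums and maxima of two positive (resp.\ negative, resp.\ zero) integers are again positive (resp.\ negative, resp.\ zero). Thus the subsemiring generated by a single vector $\mb v \in \Z^n$ is contained in the proper subset of $\Z^n$ consisting of all $\mb x$ whose $i$-th coordinate has the same sign as $v_i$ (or equals $0$ if $v_i = 0$) for each $i$. Hence a single generator never produces all of $\Z^n$, so $m(\isn) \ge 2$.

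For the upper bound with $n=1$, the set $\{1,-1\}$ already generates $\Z$ using only $+$: every $k>0$ is a sum of $k$ copies of $1$, every $-k$ a sum of $k$ copies of $-1$, and $0=1+(-1)$.

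For $n=2$, I would display an explicit pair, say $\mb v_1=(1,-1)$ and $\mb v_2=(-2,1)$, and verify that $\pm\mb{e}_1,\pm\mb{e}_2$ all lie in the generated subsemiring. The key computations are $\mb v_1+\mb v_2=(-1,0)=-\mb{e}_1$, $2\mb v_1+\mb v_2=(0,-1)=-\mb{e}_2$, $\mb v_1\vee(\mb v_1+\mb v_2)=(1,-1)\vee(-1,0)=(1,0)=\mb{e}_1$, and $\mb v_2\vee(2\mb v_1+\mb v_2)=(-2,1)\vee(0,-1)=(0,1)=\mb{e}_2$. Once $\pm\mb{e}_1,\pm\mb{e}_2$ are generated, any $(a,b)\in\Z^2$ is a non-empty sum of them ($|a|$ copies of $\mb{e}_1$ or $-\mb{e}_1$ depending on the sign of $a$, analogously for $b$; and $\mb{e}_1+(-\mb{e}_1)$ for $(0,0)$). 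Hence $\{\mb v_1,\mb v_2\}$ generates all of $\Z^2$.

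The delicate point, and the main obstacle, is picking a working pair for $n=2$: a symmetric choice like $(1,-1),(-1,1)$ fails because every $+$-combination has zero coordinate sum, and $\vee$ of two vectors with non-negative coordinate sum still has non-negative coordinate sum, so nothing with negative coordinate sum (e.g.\ $(-1,-1)$) would be reachable. The asymmetric choice above works because its $+$-combinations have strictly negative coordinate sum, and $\vee$ can then enlarge the sum arbitrarily.
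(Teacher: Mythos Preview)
Your proof is correct and follows essentially the same approach as the paper: sign preservation for the lower bound, the pair $\{1,-1\}$ for $n=1$, and an explicit pair of generators for $n=2$ whose $+,\vee$-combinations reach a small set that additively generates $\Z^2$. The only cosmetic differences are your choice of pair $(1,-1),(-2,1)$ versus the paper's symmetric $(1,-2),(-2,1)$, and that you produce $\pm\mb e_1,\pm\mb e_2$ directly while the paper produces $\mb e_1,\mb e_2,(-1,-1)$ and invokes Lemma~\ref{n+1}.
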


\begin{proof}
	It is clear that one generator can not be sufficient, as the sign is preserved under both operations, i.e., every coordinate would stay either positive or negative, and so all of $\Zn$ could not be generated by just one generator.

	We finish the proof by finding the generating set $X$ of size two. If $n=1$, we let $X = \{(1),(-1)\}$ that generate $\Z^1$ just using $+$.

	For $n=2$, we define $X$ to be $\{(1,-2),(-2,1)\}$. It suffices to generate the following three vectors $\{(1,0),(0,1),(-1,-1)\}$ since they generate $\Z^2$ using coordinate-wise addition (see Lemma \ref{n+1}). We obtain the first one as follows:
	\begin{align*}
		(5, -2) &=  (1,-2) \vee (5 \cdot (1,-2) )  \\
		(1,0) &= (5,-2) + 2 \cdot (-2,1),
	\end{align*}
	and the second one is obtained symmetrically. 
Finally, $(-1,-1)=(1,-2)+(-2,1)$.
\end{proof}

Let us now consider the case when $n \geq 3$. Surprisingly, it turns out that $m( \isn) = 3$ regardless of the value of $n$. In order to show that two generators do not suffice, we need to state an auxiliary lemma.

\begin{obs}
	Let $1 \leq j \neq k \leq n$ and let us take two vectors $\mb{u},\mb{v} \in G(\isn)=\Zn$ satisfying $u_j \geq a \cdot u_k$ and $v_j \geq a \cdot v_k$ for some positive real number $a$. Then the same inequalities hold for the vectors $\mb{u} + \mb{v}, \mb{u} \vee \mb{v}\in G(\isn)$.
\end{obs}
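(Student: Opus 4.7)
The plan is to verify the two inequalities separately, using the fact that in the forest $\isn$ of $n$ isolated vertices the operation $\vee$ is just the coordinate-wise maximum (each vertex is its own root, so the unique path from the root to $w$ has length $1$ and the definition collapses to $(g\vee h)_w=\max(g_w,h_w)$).

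For the sum, I would simply add the two given inequalities. Since $a>0$, one obtains
\[
(u+v)_j = u_j+v_j \;\geq\; a u_k + a v_k \;=\; a(u+v)_k,
\]
and there is nothing further to check.

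For the join, I would reduce to the case $u_k\geq v_k$ without loss of generality (otherwise swap $\mb u$ and $\mb v$). Then $\max(u_k,v_k)=u_k$ since $a>0$ preserves the order between $u_k$ and $v_k$ after scaling, and the hypothesis gives
\[
(\mb u\vee\mb v)_j = \max(u_j,v_j) \;\geq\; u_j \;\geq\; a u_k \;=\; a\max(u_k,v_k) \;=\; a(\mb u\vee\mb v)_k.
\]

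There is no real obstacle here: the statement is an elementary monotonicity observation, and the only thing to be mildly careful about is that the vectors can have coordinates of either sign, so one should make sure the symmetry argument uses only that $a$ is positive (hence preserves order under multiplication), not any assumption on the signs of the $u_i$ or $v_i$. The lemma is stated in this form precisely so it can later be iterated along longer combinations of sums and joins.
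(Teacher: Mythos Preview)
Your proof is correct and matches the paper's approach essentially verbatim: the sum case is identical, and for the join the paper simply notes that $m_j\geq u_j\geq a u_k$ and $m_j\geq v_j\geq a v_k$ while $m_k\in\{u_k,v_k\}$, which is the same as your WLOG argument. One tiny wording quibble: the remark ``since $a>0$ preserves the order between $u_k$ and $v_k$ after scaling'' is unnecessary---$\max(u_k,v_k)=u_k$ follows immediately from the assumption $u_k\geq v_k$, with no reference to $a$.
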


\begin{proof}
	The inequality for $\mb{u} + \mb{v}$ is verified by an easy computation:
	\begin{align*}
		(u+v)_j = u_j + v_j \geq a \cdot u_k + a \cdot v_k = a \cdot (u+v)_k.
	\end{align*}

	Let $\mb{m} = \mb{u} \vee \mb{v}$. We have $m_j \geq u_j \geq a \cdot u_k$ and that $m_j \geq v_j \geq a \cdot v_k$. Since $m_k = u_k$ or $m_k = v_k$, the conclusion follows.
\end{proof}

\begin{claim} \label{ineqinv}
	Let $X = \{\mb{g}_1, \dots, \mb{g}_m\}$ be a set of vectors in $G(\isn)=\Zn$, $\mb{g}_i=(g_{i,1},\dots,g_{i,n})$. Let $1 \leq j \neq k \leq n$ and let $a$ be a positive real number such that $g_{i,j} \geq a \cdot g_{i,k}$ for every $i\in[m]$. 
	
	Then $X$ does not generate $G(\isn)$.
\end{claim}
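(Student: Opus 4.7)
The plan is to combine the preceding observation with a straightforward induction, and then to exhibit an explicit vector of $\Zn$ that violates the inequality. Concretely, let $H\subseteq G(\isn)$ denote the subsemiring generated by $X$, i.e., the set of all vectors obtainable from $\mb g_1,\dots,\mb g_m$ by finitely many applications of $+$ and $\vee$ (no inverses, no $\mb 0$). I want to show that every $\mb v\in H$ satisfies $v_j\ge a\cdot v_k$.

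First I would prove this by induction on the length of a semiring expression producing $\mb v$. The base case is $\mb v=\mb g_i$, where the inequality holds by hypothesis. For the inductive step, any $\mb v\in H$ with a non-trivial expression is of the form $\mb u_1+\mb u_2$ or $\mb u_1\vee\mb u_2$ with $\mb u_1,\mb u_2\in H$ given by shorter expressions; by the inductive hypothesis both $\mb u_1$ and $\mb u_2$ satisfy the inequality, and then the preceding observation (which is exactly the closure statement for $+$ and $\vee$ under the inequality $x_j\ge a\cdot x_k$) gives that $\mb v$ does as well.

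Next I would exhibit a vector violating the inequality and hence lying outside $H$. The cleanest witness is the standard basis vector $\mb e_k\in\Zn$, whose $j$-th coordinate is $0$ and whose $k$-th coordinate is $1$; the required inequality $0\ge a\cdot 1=a$ fails since $a>0$. Therefore $\mb e_k\notin H$, which shows that $X$ does not generate $G(\isn)$.

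The proof is essentially routine given the preceding lemma; the only mild subtlety is making sure that the induction is set up over \emph{both} semiring operations simultaneously, rather than only over sums or only over joins, since the closure statement handles them on equal footing. There is no combinatorial obstacle beyond that, and the positivity hypothesis $a>0$ is used precisely to rule out $\mb e_k$.
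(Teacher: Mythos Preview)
Your argument is correct and is essentially the same as the paper's: both invoke the preceding observation to show that the inequality $u_j\ge a\cdot u_k$ propagates to every vector generated by $X$, and then note that some vector in $\Zn$ fails it. You simply make the induction explicit and name a concrete witness $\mb e_k$, whereas the paper leaves both of these implicit.
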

\begin{proof}
	By the previous lemma, the inequality $u_j \geq a \cdot u_k$ holds for all vectors $\mb{u}=(u_1,\dots,u_n)$ generated by $X$ -- but this inequality does not hold for every vector in $G(\isn)=\Zn$.
\end{proof}

We are now ready to prove the following theorem.

\begin{thm} \label{genzn}
	Let $n \in \N$, $n \geq 3$. Then $m( \isn) = 3$.
\end{thm}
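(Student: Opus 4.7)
Plan: The equality $m(\isn)=3$ decomposes into a lower bound ($m(\isn)\geq 3$: no two vectors suffice when $n\geq 3$) and an upper bound ($m(\isn)\leq 3$: an explicit construction with three generators).

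For the lower bound, I assume for contradiction that $\{\mb g_1,\mb g_2\}\subseteq\Z^n$ generates $G(\isn)=\Z^n$. For each coordinate $k\in[n]$ consider the ``column'' $C_k=(g_{1,k},g_{2,k})\in\Z^2$. Since $+$ and $\vee$ act coordinatewise and preserve the half-lines $\{x\geq 0\}$ and $\{x\leq 0\}$, if some $C_k$ has both entries non-negative (resp.\ non-positive) then the $k$-th coordinate of every generated vector is forced to share that sign, contradicting the generation of $\Z^n$. Hence every column lies in the open second or open fourth quadrant of $\R^2$. By pigeonhole ($n\geq 3$) at least two columns $C_j,C_k$ land in the same open quadrant, WLOG the fourth (the second is symmetric by swapping $\mb g_1\leftrightarrow\mb g_2$). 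After relabeling so that $g_{2,j}/g_{1,j}\geq g_{2,k}/g_{1,k}$, setting $a=g_{1,j}/g_{1,k}>0$ yields $g_{1,j}=a\cdot g_{1,k}$ and, by clearing denominators in the slope inequality, $g_{2,j}\geq a\cdot g_{2,k}$. This is precisely the hypothesis of Proposition~\ref{ineqinv}, which forbids $\{\mb g_1,\mb g_2\}$ from generating $G(\isn)$---a contradiction.

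For the upper bound I exhibit three vectors that together generate $\Z^n$. A workable template is
\[
\mb u_1=(1,1,-2,-2,\dots,-2),\qquad \mb u_2=(-2,-2,1,1,\dots,1),
\]
together with a third vector $\mb u_3$ whose entries have varying magnitudes (not only varying signs) at coordinates $3,\dots,n$, so that the $n$ column triples $(u_{1,k},u_{2,k},u_{3,k})\in\Z^3$ are pairwise distinct. By construction $\mb u_1+\mb u_2$ is a strictly negative vector. The crux is to check that every $\mb e_i$ is realized as a $\vee$ of a bounded number of non-negative integer combinations of $\mb u_1,\mb u_2,\mb u_3$: for each $i$, one combination should have $i$-th coordinate equal to $1$ with all other coordinates $\leq 0$, supplemented by combinations using only $\mb u_1,\mb u_2$ that zero out the remaining coordinates, so that the coordinatewise maximum of these few combinations equals $\mb e_i$. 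Once $\mb e_1,\dots,\mb e_n$ and a strictly negative vector all lie in the generated subsemiring, Lemma~\ref{n+1} delivers all of $\Z^n$.

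The delicate part is the upper-bound construction: with only three generators available for arbitrarily large $n$, there is little slack, and the magnitudes in $\mb u_3$ must be tuned---and an isolation recipe for each $\mb e_i$ devised---so that the inevitable ``side contributions'' of $\vee$ never exceed the target coordinate. Carrying out this balance uniformly in $n$ is the main obstacle.
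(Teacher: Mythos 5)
Your lower bound is correct and is essentially the paper's own argument: sign preservation under $+$ and $\vee$ forces every column into an open mixed-sign quadrant, pigeonhole with $n\geq 3$ produces two columns in the same quadrant, and Proposition~\ref{ineqinv} finishes. The relabeling trick that makes one of the two inequalities an equality is a harmless variant of the paper's case split on $v_j/v_k\lessgtr a$.

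The upper bound, however, has a genuine gap, and it is more than the incompleteness you acknowledge when you call the tuning of $\mb u_3$ ``the main obstacle.'' The proposed template cannot be repaired: with $\mb u_1=(1,1,-2,\dots,-2)$ and $\mb u_2=(-2,-2,1,\dots,1)$, coordinates $1$ and $2$ receive identical entries in both of these vectors, so for any choice of $\mb u_3$ at least one of $u_{3,1}\geq u_{3,2}$ or $u_{3,2}\geq u_{3,1}$ holds; taking $a=1$ and $(j,k)=(1,2)$ or $(2,1)$ accordingly, all three generators satisfy $g_{i,j}\geq a\cdot g_{i,k}$, and Proposition~\ref{ineqinv} --- the very tool you invoke for the lower bound --- shows that no such triple generates $G(\isn)$. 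Making the column triples pairwise distinct is therefore necessary but nowhere near sufficient; what is needed is that no pair of coordinates is simultaneously comparable (in the sense of that proposition) across all three generators. The paper achieves this quantitatively: it sets $a_i=i$, $b_i=n^2+1-i^2$, $c_i=-1$, observes that $\mb u_i=2i\cdot\mb a+\mb b$ has $j$th coordinate $n^2+1+j(2i-j)$ with a strict unique maximum at $j=i$, shifts by a multiple of $\mb c$ to get $\mb v_i$ with $i$th coordinate $1$ and all others non-positive, then produces $\mb 0=(\mb v_1\vee\dots\vee\mb v_n)+\mb c$ and $\mb e_i=\mb v_i\vee\mb 0$, and concludes with Lemma~\ref{n+1}. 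A correct completion of your upper bound would have to replace your template with generators of this kind and carry out the isolation of each $\mb e_i$ explicitly.
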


\begin{proof}
	We will start with showing that two generators do not suffice. For contradiction, suppose that the set $X = \{\mb{u}, \mb{v}\}$ generates $G(\isn)=\Zn$. 
	
	Suppose that there is a coordinate $i$ such that $u_i, v_i \geq 0$. Both operations preserve the sign, thus we can not generate any vector that has negative $i$th coordinate. Similarly if $u_i, v_i \leq 0$.
	
	Therefore, for every $i \in [n]$ we have $u_i > 0, v_i < 0$, or $u_i < 0, v_i > 0$. Since $n \geq 3$, there are two coordinates $j \neq k$ such that $u_j$ has the same sign as $u_k$ and $v_j$ has the same sign as $v_k$. 
	
	Without loss of generality, assume that $u_j,u_k > 0, v_j,v_k < 0$. Let us denote the positive real number $u_j / u_k$ by $a$. If $v_j / v_k \leq a$, then both inequalities $u_j \geq a \cdot u_k$ and $v_j \geq a \cdot v_k$ are satisfied. On the other hand, if $v_j / v_k \geq a$, then both $u_k \geq (1/a) \cdot u_j$ and $v_k \geq (1/a) \cdot v_j$ are satisfied. In either case, we found an inequality satisfied by both vectors from $X$, and so $X$ does not generate $G(\isn)$ by Proposition~\ref{ineqinv}.

	We finish the proof by finding a set of three generators of $G(\isn)=\Zn$. We let $k = n^2 + 1$ and we define $X=\{\mb{a},\mb{b},\mb{c}\}$ by setting 
	$$a_i= i,\ \ \ b_i = k - i^2,\ \ \  c_i = -1.$$ 
	Note that $k$ is chosen so that $\mb{b}$ is a positive vector.
	
	We will start by generating $n$ positive vectors $\mb{u}_1, \dots, \mb{u}_n$ such that the $i$th coordinate of $\mb{u}_i$ is strictly largest. We define $\mb{u}_i$ to be $2i \cdot \mb{a} + \mb{b}$. Then the $j$th coordinate of $\mb{u}_i$ is $u_{i,j} = 2i \cdot a_j + b_j = 2ij + k - j^2$ = $k + j (2i-j)$. It is easy to see that this expression attains maximum for $j=i$, which gives us that the $i$th coordinate of $\mb{u}_i$ is indeed maximal, i.e., $u_{i,i}>u_{i,j}$ for all $j\neq i$.
	
	For every $i \in [n]$, let us then generate $\mb{v}_i = \mb{u}_i + ((u_{i,i}) - 1) \cdot \mb{c}$. Note that the $i$th coordinate of $\mb{v}_i$ equals $1$ and all the other coordinates are non-positive. 
	
	Now it is the time to apply the coordinate-wise maximum  $\vee$. We obtain the zero vector as $\mb{0} = (\mb{v}_1 \vee \mb{v}_2 \vee \dots \vee \mb{v}_n) + \mb{c}$. Finally, we get $\mb{e}_i$ as $\mb{v}_i \vee \mb{0}$. 
	
	By Lemma $\ref{n+1}$, all the vectors $\mb{e}_i$ together with the negative vector $\mb{c}$ generate $(\Zn,+)$ as a semigroup.
\end{proof}

\section{Minimal number of generators}
Before we study the generators of general forests, let us introduce a partial ordering $\preceq$ on the set of rooted forests, which is compatible with the function $m$ in the sense that $(F,R) \preceq (E,S)$ implies $m(F,R) \leq m(E,S)$.

\begin{defn}
	Let $(F,R)$ and $(E,S)$ be two rooted forests. We say that $(F,R) \preceq (E,S)$ if $(F,R)$ can be obtained from $(E,S)$ by repeatedly deleting leaves (and the edges that connected them to the forest) from the forest $E$. Note that $R \subseteq S$ is the set of roots from $S$ which were not deleted.
\end{defn}

\begin{obs} \label{subtree} 
	Let $\preceq$ be the relation on the set of rooted forests defined as above. Then  $(F,R) \preceq (E,S)$ implies that $m(F,R) \leq m(E,S)$.
\end{obs}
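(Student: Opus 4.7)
The plan is to show that deleting a single leaf induces a surjective parasemifield homomorphism between the associated structures; then applying this repeatedly and using that the image of a semiring generating set under a surjective homomorphism is again a generating set yields the desired inequality.

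By induction on the number of leaf-deletions, it suffices to treat the case in which $(F,R)$ is obtained from $(E,S)$ by deleting exactly one leaf $\ell$. Let $n=|V(E)|$ and let $\pi\colon G(E,S)=\Z^{n}\to G(F,R)=\Z^{n-1}$ denote the projection that forgets the coordinate indexed by $\ell$.

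Next I would verify that $\pi$ is a homomorphism of parasemifields. Preservation of the multiplication $+$ is immediate, because on both sides it acts coordinate-wise. The main point is preservation of $\vee$, and here the key observation is the following: because $\ell$ is a leaf of $E$, it does not lie on the unique root-to-$w$ path in $E$ for any vertex $w\in V(F)$; hence that path coincides with the corresponding root-to-$w$ path in $F$. By the definition of $\vee$, the coordinate $(\mathbf g\vee\mathbf h)_w$ depends only on the $V(F)$-entries of $\mathbf g,\mathbf h$ and is computed by the same rule in both parasemifields, so $\pi(\mathbf g\vee\mathbf h)=\pi(\mathbf g)\vee\pi(\mathbf h)$. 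This argument uniformly handles both subcases: $\ell\notin S$ (a non-root leaf of some tree component) and $\ell\in S$ (an isolated-vertex tree component being removed, in which case one even has $G(E,S)\simeq G(F,R)\times\Z$ as parasemifields).

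Since $\pi$ is clearly surjective (any vector in $\Z^{n-1}$ lifts by choosing any value for the $\ell$-coordinate), the $\pi$-image of a semiring generating set of $G(E,S)$ is a semiring generating set of $G(F,R)$. Taking a generating set of minimal size $m(E,S)$ thus produces a generating set of $G(F,R)$ of cardinality at most $m(E,S)$, yielding $m(F,R)\leq m(E,S)$. The only genuinely nontrivial step is the preservation of $\vee$ under $\pi$, which reduces entirely to the leaf observation above; everything else is routine bookkeeping.
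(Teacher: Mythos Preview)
Your argument is correct and follows essentially the same approach as the paper: reduce to a single leaf-deletion and observe that dropping the leaf coordinate from a minimal generating set of $G(E,S)$ yields a generating set of $G(F,R)$. You are simply more explicit than the paper, which asserts this step in one line without verifying that the projection preserves $\vee$; your justification via the observation that a leaf never lies on the root-to-$w$ path for $w\neq\ell$ is exactly the missing detail.
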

\begin{proof} 
	Let $(E_1,S_1)$ be a rooted forest obtained by deleting a leaf $l$ from $(E,S)$.
	If $X = \{\mb{g}_1, \dots, \mb{g}_k\}$ is a minimal generating set of $G(E,S)$, then we can obtain a generating set of size $k$ for $G(E_1,S_1)$ by simply deleting the coordinate which corresponds to the leaf $l$ from all generators in $X$.
	Thus $m(E_1,S_1) \leq m(E,S)$.
	
	As $(F,R)$ is obtained by repeatedly deleting leaves from $(E,S)$, we can repeatedly use the result of the  previous paragraph to establish $m(F,R) \leq m(E,S)$.
\end{proof}

We will now determine the minimal number of semiring generators for parasemifields which correspond to rooted paths, i.e., to rooted trees that have exactly one leaf. 

\begin{claim} \label{genpath}
	Let $P_n = (\{v_1, \dots, v_n\},v_1)$ be a rooted path with the root $v_1$. Then $m(P_n) = n+1$.
\end{claim}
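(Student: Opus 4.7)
The plan is to exploit the simple structure of $\vee$ on a rooted path in order to reduce the problem to the already-established Proposition~\ref{genplus} about semigroup generators of $(\Z^n,+)$.

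The key observation is that for the rooted path $P_n$ the operation $\vee$ on $G(P_n)=\Zn$ coincides with the lexicographic maximum taken with respect to the order $v_1,v_2,\dots,v_n$. Indeed, given $\mathbf{g},\mathbf{h}\in\Zn$, either they agree, or there is a smallest index $i^*$ at which they differ; since for every vertex $v_j$ the unique path from the root $v_1$ to $v_j$ is $v_1,\dots,v_j$, a direct coordinate-by-coordinate application of the definition of $\vee$ shows that $\mathbf{g}\vee\mathbf{h}$ equals $\mathbf{g}$ if $g_{v_{i^*}}>h_{v_{i^*}}$, and $\mathbf{h}$ otherwise. In particular, $\mathbf{g}\vee\mathbf{h}\in\{\mathbf{g},\mathbf{h}\}$ always.

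From this I would deduce the following structural lemma: for any subset $X\subseteq\Zn$, the sub-semiring of $G(P_n)$ generated by $X$ coincides with the sub-semigroup of $(\Zn,+)$ generated by $X$. The inclusion $\supseteq$ is immediate since $+$ is a semiring operation. For $\subseteq$, I would induct on the construction of a semiring term from $X$: each generator is a non-negative integer combination of elements of $X$; if $\mathbf{u},\mathbf{v}$ are such combinations, then so is $\mathbf{u}+\mathbf{v}$, and by the key observation, $\mathbf{u}\vee\mathbf{v}\in\{\mathbf{u},\mathbf{v}\}$ is also such a combination.

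With the structural lemma in hand the proposition is immediate. If $X$ generates $G(P_n)$ as a semiring then $X$ generates $(\Zn,+)$ as a semigroup, so $|X|\geq n+1$ by Proposition~\ref{genplus}, yielding $m(P_n)\geq n+1$. Conversely, the $(n+1)$-element set $\{\mathbf{e}_1,\dots,\mathbf{e}_n,-\mathbf{e}_1-\cdots-\mathbf{e}_n\}$ from Lemma~\ref{n+1} generates $(\Zn,+)$ as a semigroup and so a fortiori generates $G(P_n)$ as a semiring, giving $m(P_n)\leq n+1$. The only conceptual step is the observation that on a path the $\vee$ operation always picks one of its arguments and therefore the semiring structure degenerates to the additive semigroup structure; once that is noted, no real obstacle remains.
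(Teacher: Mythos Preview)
Your proof is correct and follows essentially the same approach as the paper: both arguments hinge on the observation that for a rooted path the operation $\vee$ always returns one of its two arguments, so semiring generation coincides with additive semigroup generation and Proposition~\ref{genplus} applies. The paper states this reduction in a single sentence, whereas you spell out the structural lemma and the induction explicitly, but the underlying idea is identical.
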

\begin{proof}
	It follows from the definition of the operation $\vee$ that we have either $\mb{v} \vee \mb{w} = \mb{v}$ or $\mb{v} \vee \mb{w} = \mb{w}$ for every $\mb{v},\mb{w} \in G(P_n)$. Therefore, the minimal number of semiring generators of $G(P_n)$ equals the minimal number of semigroup generators of $\Zn$, which is $n+1$ by Proposition \ref{genplus}.
\end{proof}

\begin{coro} \label{bounddepth}
	Let $(F,R)$ be a rooted forest of depth $l$. Then $m(F,R) \geq l+1$. 
\end{coro}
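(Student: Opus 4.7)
The plan is to combine the two results immediately preceding this corollary: Proposition \ref{genpath} gives the exact value $m(P_l)=l+1$ for a rooted path on $l$ vertices, and Lemma \ref{subtree} shows that the function $m$ is monotone with respect to the relation $\preceq$. So it suffices to exhibit a rooted path $P_l$ with $P_l \preceq (F,R)$.

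To do this, I would first choose a witness for the depth: by definition there is a root $v = v_1 \in R$ and a vertex $w = v_l$ together with a path of distinct vertices $v_1, v_2, \dots, v_l$ in which consecutive vertices are joined by an edge in $F$. These $l$ vertices, together with the $l-1$ edges connecting them, form a rooted path $P_l$ (with root $v_1$) sitting inside $(F,R)$ as a subgraph.

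The main step is then to verify that $P_l$ can be obtained from $(F,R)$ by iteratively removing leaves (and associated edges), and in particular that what remains is still a rooted forest in the required sense. I would argue this by induction on $|V(F)| - l$: at each stage the current forest strictly contains $P_l$, so some vertex $u$ lies outside $\{v_1,\dots,v_l\}$. Taking a vertex of maximal depth among those outside the path yields a leaf (any child of $u$ would have strictly greater depth, contradicting maximality; and if $u$ is an isolated root, it is a leaf by definition). Deleting $u$ does not affect $P_l$, does not remove any vertex of $P_l$ from the set of roots (the only root in $P_l$ is $v_1$, which is not $u$), and decreases $|V(F)| - l$. Iterating, we arrive at $P_l$ with root $v_1$, so $P_l \preceq (F,R)$.

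Applying Lemma \ref{subtree} and Proposition \ref{genpath} then yields
\[
m(F,R) \;\geq\; m(P_l) \;=\; l+1,
\]
as required. The only mild subtlety is bookkeeping the rooted structure under leaf deletion (ensuring that the root $v_1$ of $P_l$ is never one of the leaves removed), which is clear from the construction since $v_1$ has a child $v_2$ in $P_l$ at every intermediate stage and hence is never a leaf until the very end, and even then it is retained as the root of the resulting one-component forest.
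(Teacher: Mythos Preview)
Your proposal is correct and follows exactly the same approach as the paper: locate a depth-witnessing path $P_l$ from a root, observe that $P_l \preceq (F,R)$, and then combine Lemma~\ref{subtree} with Proposition~\ref{genpath}. The paper's proof simply asserts that $(F,R) \succeq (P,v)$ without justification, whereas you supply the explicit leaf-deletion argument; this extra detail is sound (in particular, any child of a vertex outside the path must itself lie outside the path, since each $v_i$ with $i\ge 2$ has parent $v_{i-1}$ and $v_1$ is a root), so nothing is missing.
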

\begin{proof}
	From the definition of the depth, we can find $w \in V(F)$ and $v \in R$ such that there is a path $P$ from $v$ to $w$ consisting of $l$ vertices. It follows that $(F,R) \succeq (P,v)$. Combining Lemma \ref{subtree} and Proposition \ref{genpath}, we obtain that $m(F,R) \geq m(P,v) = l+1$.
\end{proof}

Let further $(F,R)$ be a general rooted forest. 

We have two parameters for measuring the size of $(F,R)$, namely its depth and its amount of branching, captured by the number of roots and by
the degrees of vertices. Specifically, let us define the \textit{width} of a rooted forest to be the maximum of the number of roots, degrees of the roots, and of the degrees of all vertices $-1$, i.e.,
$$\text{width}(F,R)=\max\left\{|R|,{\max}_{r\in R}\{\text{deg}(r)\},{\max}_{v\in V(F)}\{\text{deg}(v)-1\}\right\},$$
where $\text{deg}(v)$ denotes the degree of a vertex $v$.
As we have just seen in Corollary \ref{bounddepth}, $m(F,R)$ grows at least linearly with the depth of $(F,R)$  but, on the other hand, rooted forests of arbitrarily large width can still have constant $m(F,R)$: by Theorem \ref{genzn}, $m( \isn) = 3$ for any $n \geq 3$ (and one could easily modify this example, e.g., to a tree with a root connected to $n$ leaves).  
Let us thus define a `universal' rooted forest of width $k$ and depth $l$.

\begin{defn}
	Let $k,l$ be positive integers. We define $T_{kl}$ as the unique rooted forest such that:
	\begin{itemize}
		\item there are $k$ roots and each of them has degree $k$,
		\item every vertex that is not a leaf or a root has degree $k+1$,
		\item every leaf has depth exactly $l$. 
	\end{itemize}  
\end{defn}

To illustrate the definition, we give the following picture containing two examples of what $T_{kl}$ looks like.  

\begin{figure}[H]
	\centering
	\includegraphics[width = 9cm]{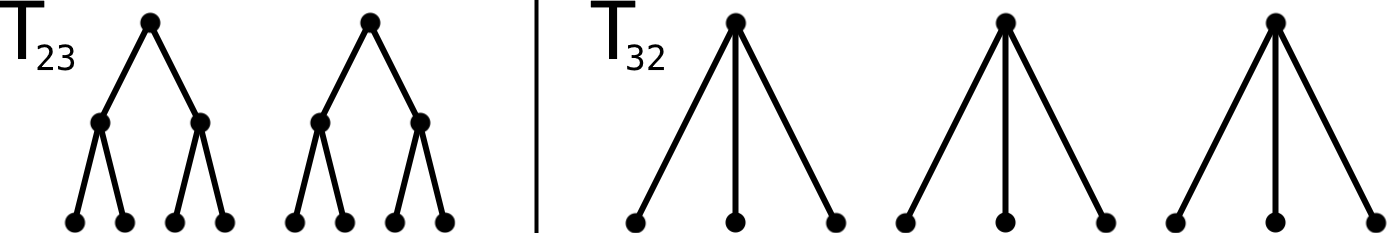}
	\caption{Rooted forests $T_{23}$ and $T_{32}$}
\end{figure}

Observe that $G(F,R) \preceq T_{kl}$ for every rooted forest $(F,R)$ of width $k$ and depth $l$, which implies that $m(F,R) \leq m(T_{kl})$ by Lemma \ref{subtree}. In order to give an upper bound on $m(F,R)$, we would like to estimate $m(T_{kl})$ from above. The first step is the following quite powerful theorem.

\begin{thm} \label{boundcopies}
	Let $(F,R)$ be a rooted forest, $k = m(F,R)$, and construct a rooted forest $(E,S)$ as the disjoint union of $m \geq 2$  copies of $(F,R)$.  
	
	(a) If $m=2$, then $m(E,S) \leq k+1$.
	
	(b) If $m \geq 3$, then $m(E,S) \leq k+2$.
\end{thm}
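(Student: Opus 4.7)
The plan is to exhibit explicit generating sets of the required sizes. Let $\mb{g}_1,\dots,\mb{g}_k$ be a minimum semiring generating set of $G(F,R)$. The natural first ingredient is the collection of $k$ ``diagonal'' lifts $\bar{\mb{g}}_i=(\mb{g}_i,\dots,\mb{g}_i)\in G(F,R)^m=G(E,S)$. Because $E$ is the disjoint union of the $m$ copies of $F$, the $\vee$-operation on $G(E,S)$ acts blockwise across the $m$ copies, so running any $(\vee,+)$-expression producing an element $\mb{z}\in G(F,R)$ in parallel in all blocks shows that $\bar{\mb{g}}_1,\dots,\bar{\mb{g}}_k$ already semiring-generate the entire diagonal $\Delta=\{(\mb{z},\dots,\mb{z}):\mb{z}\in G(F,R)\}$.

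It then remains to adjoin one (for $m=2$) or two (for $m\geq 3$) further asymmetric generators and verify that, combined with the diagonals through $\vee$ and $+$, they produce every off-diagonal tuple. For part (a), I plan to take a single extra generator $\mb{h}=(\mb{p},\mb{q})\in G(F,R)^2$ with $\mb{p}$ and $\mb{q}$ chosen so that the $\ell$-group difference $\mb{p}-\mb{q}$ is sufficiently rich (for instance with $\mb{p}$ positive and $\mb{q}$ negative, both of large magnitude relative to the coordinates of any fixed target). Pushing $\vee$ to the outside via the distributive law $\mb{a}+(\mb{b}\vee\mb{c})=(\mb{a}+\mb{b})\vee(\mb{a}+\mb{c})$, every element of the semiring generated by $\bar{\mb{g}}_1,\dots,\bar{\mb{g}}_k,\mb{h}$ has the normal form
\[
    \bigvee_{j}\bigl(\mb{s}_j+b_j\mb{p},\;\mb{s}_j+b_j\mb{q}\bigr)=\Bigl(\bigvee_{j}(\mb{s}_j+b_j\mb{p}),\;\bigvee_{j}(\mb{s}_j+b_j\mb{q})\Bigr),
\]
where the $\mb{s}_j$'s are arbitrary elements of $G(F,R)$ (supplied by $\Delta$) and the $b_j\in\N$ are non-negative integers. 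The verification is then to exhibit, for any target $(\mb{x},\mb{y})\in G(F,R)^2$, a choice of parameters making both coordinates simultaneously attain $\mb{x}$ and $\mb{y}$, by taking some $b_j$ large enough to shift one term past others and then $\vee$-clipping against diagonal translates.

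For part (b) with $m\geq 3$ the strategy is analogous but uses two asymmetric extras $\mb{h}_1,\mb{h}_2\in G(F,R)^m$. They are designed so that the non-negative integer combinations $a\mb{h}_1+b\mb{h}_2$, together with diagonal translates and $\vee$'s, provide enough independent ``selector patterns'' across the $m$ blocks for each block to be controlled independently. The gap between $k+1$ and $k+2$ reflects the heuristic that a single asymmetric generator can only break the symmetry between two copies, whereas $m\geq 3$ copies require two linearly independent asymmetric directions.

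The principal obstacle in both parts is this verification step: exhibiting, for every target tuple, an explicit decomposition of it into the above normal form. It reduces to careful bookkeeping via the distributive law, Proposition~\ref{genplus} (which tells us how to assemble $\Zn$ from $n+1$ generators), and Lemma~\ref{n+1} (which further reduces the task to producing the standard basis vectors together with one fixed negative vector). The most delicate point is to choose the auxiliary data $\mb{p},\mb{q}$ (respectively $\mb{h}_1,\mb{h}_2$) so that the generated combinations truly cover all of $G(F,R)^m$ without missing any ``direction''.
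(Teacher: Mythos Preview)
Your plan for part~(b) is essentially the paper's approach and will work once you fill in the details: the paper takes exactly the $k$ diagonal lifts $(\mb{g}_i\mid\cdots\mid\mb{g}_i)$ and adjoins the two constant-block vectors $(\mb 1\mid\mb 2\mid\cdots\mid\mb m)$ and $(\mb{m^2}\mid\mb{m^2-3}\mid\cdots\mid\mb 1)$ coming from the three-element generating set of $G(\is_m)$ in Theorem~\ref{genzn} (the third one, $(-\mb 1\mid\cdots\mid-\mb 1)$, is already diagonal). These produce all constant-block vectors $(\mb{c}_1\mid\cdots\mid\mb{c}_m)$, hence the selectors $\mb t_i$, and then one combines with diagonals and a final $\vee\mb 0$ to get each $\mb e_{ni+j}$.

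Part~(a), however, has a genuine gap: the scheme ``straight diagonals $(\mb g_i,\mb g_i)$ plus a single asymmetric $(\mb p,\mb q)$'' cannot succeed, no matter how $\mb p,\mb q$ are chosen. Already in the smallest case $F=P_1$, $G(F,R)=\Z$, your generators are $(1,1),(-1,-1),(p,q)$ with, say, $p>q$. Every $+$-monomial then has the form $(s+\gamma p,\,s+\gamma q)$ with $s\in\Z$ and $\gamma\ge 0$, so its first coordinate is $\ge$ its second; since $\vee$ is coordinate-wise $\max$, taking $j^*$ realising the second-coordinate maximum shows every element of the generated semiring still has first coordinate $\ge$ second. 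Thus $(0,1)$ is unreachable. The same obstruction persists for general $F$: with one asymmetric extra the difference ``first block minus second block'' of every monomial is a non-negative multiple of the fixed vector $\mb p-\mb q$, and the $\vee$ cannot reverse this bias.

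The paper's fix is precisely to \emph{not} take the diagonals as generators. Instead it shifts them: with a large constant $C$ it sets $\mb h_i=(\mb g_i+2\mb C\mid\mb g_i-4\mb C)$ and takes the single extra $\mb h_{k+1}=(-\mb C\mid 2\mb C)$, asymmetric in the \emph{opposite} direction. Then $ \mb h_i+2\mb h_{k+1}=(\mb g_i\mid\mb g_i)$ recovers the full diagonal, while $\mb h_i\vee\mb h_{k+1}=(\mb g_i+2\mb C\mid 2\mb C)$ (because $C$ dominates at all root coordinates) gives, after a diagonal translate, $(\mb g_i\mid\mb 0)$ and hence all of $(\mb v\mid\mb 0)$; finally $(\mb v\mid\mb w)=(\mb v-\mb w\mid\mb 0)+(\mb w\mid\mb w)$. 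So the key idea you are missing in~(a) is that the asymmetry must be distributed across \emph{all} $k+1$ generators, not concentrated in the one extra.
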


\begin{proof}
	Let $n = | V(F)|$ and identify the sets $\Z^n=G(F,R)$ and $(\Zn)^m=G(E,S)$. Let  $X = \{\mb{g}_1, \dots, \mb{g}_k \} \in \Zn$ be a minimal set of generators of $G(F,R)$. Throughout the proof, we are going to work with vectors from $(\Zn)^m$ and we will denote $\mb{v} \in (\Zn)^m$ by $(v_1, \dots, v_n \mid v_{n+1}, \dots, v_{2n} \mid \dots \mid v_{(m-1)n+1}, \dots, v_{mn})$.
	For an integer $a\in\Z$, we will denote $\mb{a}=(a,\dots,a)$.

	We shall start with part (a), i.e., $m=2$. Let  $C$ be an integer such that $C > |g_{i,j}|$ for all $i \in [k], j \in [n]$. Define the set $H = \{\mb{h}_1, \dots, \mb{h}_{k+1}\}$ of $k+1$ vectors from $(\Z ^ {n})^2$. The first $k$ of them are defined as $\mb{h}_i = (\mb{g}_i + 2\mb{C} \mid \mb{g}_i-4\mb{C})$ and we let $\mb{h}_{k+1} = (-\mb{C} \mid 2\mb{C})$. 
	
	Our goal will be to show that we are able to generate any vector from $(\Z ^ {n})^2$ from the set $H$. By adding $2 \cdot \mb{h}_{k+1}$ to each $\mb{h}_i$, we obtain $(\mb{g}_i \mid \mb{g}_i)$. Since $X$ generates $G(F,R)$, we are able to obtain all vectors of the form $(\mb{v} \mid \mb{v})$ for any $\mb{v} \in \Zn$, in particular, the vector $(-2\mb{C} \mid -2\mb{C})$. 
	
	Since $C > |g_{i,j}|$ for all $i,j$, we have that $\mb{h}_i \vee \mb{h}_{k+1} = (\mb{g}_i + 2\mb{C} \mid 2\mb{C})$. Adding $(-2\mb{C} \mid -2\mb{C})$ to $(\mb{g}_i + 2\mb{C} \mid 2\mb{C})$, we obtain $(\mb{g}_i \mid \mb{0})$ for each $i \in [k]$, which suffices to generate $(\mb{v} \mid \mb{0})$ for each $\mb{v} \in \Zn$. 
	
	Finally, any vector $(\mb{v} \mid \mb{w}) \in (\Z ^ {n})^2$ can be constructed by adding $(\mb{v-w} \mid \mb{0})$ to $(\mb{w} \mid \mb{w})$, and so we are done with the first part.

	For part (b), we define the set $H = \{\mb{h}_1, \dots, \mb{h}_{k+2}\}$ of $k+2$ vectors from $(\Zn)^m$ as follows:
	\begin{align*}
		\mb{h}_i &= (\mb{g}_i \mid \mb{g}_i \mid \mb{g}_i \mid \dots \mid \mb{g}_i), i \in [k] \\
		\mb{h}_{k+1} &= (\mb{1} \mid  \mb{2} \mid \mb{3} \mid \dots \mid \mb{m}) \\
		\mb{h}_{k+2} &= (\mb{m^2+1 - 1^2} \mid \mb{m^2+1 - 2^2} \mid \mb{m^2+1 - 3^2} \mid \dots \mid \mb{m^2+1 - m^2}) \\
		&= (\mb{m^2} \mid \mb{m^2-3} \mid \mb{m^2-8} \mid  \dots \mid \mb{1}).
	\end{align*}
	Our goal will be to prove that $H$ is a generating set of $(\Zn)^m$. As in the first part, we can use vectors $\mb{h}_1, \dots, \mb{h}_k$ to generate $(\mb{v} \mid \dots \mid \mb{v})$ for any $\mb{v} \in \Zn$, in particular, the vector $\mb{c} = (\mb{-1} \mid \dots \mid \mb{-1})$.
	
	Since vectors $\{(1,2,\dots,m),  (m^2,m^2-3,m^2-8,\dots,1),  (-1,-1,\dots, -1)\}$ generate $\Z^m$ (see the proof of Theorem \ref{genzn}), we are able to use vectors $\mb{h}_{k+1}, \mb{h}_{k+2}, \mb{c}$ to generate $(\mb{c}_1 \mid \mb{c}_2 \mid \dots \mid \mb{c}_m)$ for any integers $c_1, \dots, c_m$. 
	
	Thanks to Lemma \ref{n+1}, it suffices now to generate any vector $\mb{e}_{ni+j}$ for $i \in \{0,\dots,m-1\}, j \in [n]$. Let us take such $i$ and $j$.  Thanks to the previous paragraph, we can generate the vector $\mb{t}_i = (\mb{-1} \mid \dots \mid \mb{-1} \mid \mb{0} \mid \mb{-1} \mid \dots \mid \mb{-1})$ such that $\mb{0}$ lies in the $i$th copy of $G(F,R)$. We then obtain the vector $\mb{u}_{i,j} = \mb{t}_i + (\mb{e}_j \mid \dots \mid \mb{e}_j)$ that has all $n$-tuples non-positive except for the $i$th tuple, which contains $\mb{e}_j$. We then obtain $\mb{e}_{ni+j}$ as ${\mb{u}_{i,j}} \vee \mb{0}$.
\end{proof}

Theorem \ref{boundcopies} plays an important role in establishing the upper bounds for $m(T_{kl})$ in the following theorem.

\begin{thm} \label{boundtkl}
	Let $k,l$ be positive integers. Then
	
(a) $m(T_{1l}) = l+1$,
	
(b) $l+1 \leq m(T_{2l}) \leq 2l$,
	
(c) $l+1 \leq m(T_{kl}) \leq 3l$ for $k \geq 3$.
\end{thm}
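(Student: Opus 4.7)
The plan is induction on $l$, combining Theorem \ref{boundcopies} with a key lemma that relates a single tree component of $T_{kl}$ to the smaller forest $T_{k,l-1}$.

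Part (a) is immediate: $T_{1l}$ is precisely a rooted path on $l$ vertices, so Proposition \ref{genpath} gives $m(T_{1l})=l+1$. For (b) and (c), the lower bounds $m(T_{kl})\geq l+1$ follow at once from Corollary \ref{bounddepth}, since $T_{kl}$ has depth $l$.

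For the upper bounds in (b) and (c) I would first establish the following key lemma for $l\geq 2$: \emph{if $T$ denotes one of the $k$ isomorphic tree components of $T_{kl}$, then $m(T)\leq m(T_{k,l-1})+1$.} Deleting the root of $T$ leaves exactly the forest $T_{k,l-1}$, so $G(T)=\Z\times G(T_{k,l-1})$ with the first coordinate corresponding to the new root. Taking a minimum generating set $\mb{g}_1,\dots,\mb{g}_m$ of $G(T_{k,l-1})$ with $m=m(T_{k,l-1})$, I would define
\[
\tilde{\mb{g}}_i=(2,\mb{g}_i)\ (i=1,\dots,m),\qquad \tilde{\mb{h}}=(-1,\mb{0}).
\]
The structural fact driving the construction is that $\vee$ in $G(T)$, restricted to vectors whose root coordinate agrees, acts on the remaining coordinates exactly as $\vee$ in $G(T_{k,l-1})$: when the roots tie, the path-based lex comparison defining $\vee$ in $T$ reduces to independent comparisons inside each of the $k$ subtrees rooted at the children of the root, which matches the product $\vee$ on the forest $T_{k,l-1}$. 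Using this, from $\tilde{\mb{g}}_i+2\tilde{\mb{h}}=(0,\mb{g}_i)$ one obtains all of $\{(0,\mb{v}):\mb{v}\in G(T_{k,l-1})\}$, in particular $(0,-\mb{g}_1)$. Then $\tilde{\mb{g}}_1+\tilde{\mb{h}}+(0,-\mb{g}_1)=(1,\mb{0})$, so together with $\tilde{\mb{h}}=(-1,\mb{0})$ every $(c,\mb{0})$, $c\in\Z$, is generated using only $+$. Finally $(c,\mb{v})=(c,\mb{0})+(0,\mb{v})$ covers all of $G(T)$.

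With the key lemma, the induction closes cleanly. The base case $l=1$ is $T_{k1}=\mathrm{Isol}_k$, where Proposition \ref{genz12} and Theorem \ref{genzn} give $m(T_{21})=2\leq 2$ and $m(T_{k1})=3\leq 3$ for $k\geq 3$. For the inductive step, Theorem \ref{boundcopies} applied to $T_{kl}$ as $k$ copies of $T$ yields $m(T_{kl})\leq m(T)+1$ when $k=2$ and $m(T_{kl})\leq m(T)+2$ when $k\geq 3$. Combining with the key lemma gives the recurrences $m(T_{2l})\leq m(T_{2,l-1})+2$ and $m(T_{kl})\leq m(T_{k,l-1})+3$, which close the induction at $2l$ and $3l$ respectively. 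The main obstacle is the key lemma: one must carefully unpack the path-based definition of $\vee$ on $T$ to see that root-tied vectors interact just as in the forest $G(T_{k,l-1})$, and then arrange the single extra generator $\tilde{\mb{h}}$ (with root value matched to that of $\tilde{\mb{g}}_i$) so that the zero-root lifts and the root shifts to both $+1$ and $-1$ are produced simultaneously, instead of needing two separate auxiliary generators.
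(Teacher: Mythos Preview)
Your proof is correct and follows essentially the same approach as the paper: both decompose $T_{kl}$ as $k$ copies of a single tree (your $T$ is the paper's $U_{k,l-1}$), prove the ``add one root'' bound $m(T)\le m(T_{k,l-1})+1$, and then apply Theorem~\ref{boundcopies}. The only difference is cosmetic: for the root-adding step the paper places $-1$ on the root of each lifted generator and adjoins $(1,\mb{0})$, whereas you place $2$ on the root and adjoin $(-1,\mb{0})$; both choices yield all $(0,\mb{g}_i)$ and both signs of the root shift, so the arguments are interchangeable.
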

\begin{proof}
	For part (a), it is enough to observe that $T_{1l}=P_l$ is actually the path of length $l$ and we already know that $m(T_{1l}) = m(P_l) = l+1$ (Proposition \ref{genpath}).

	Let us prove parts (b) and (c) together. The lower bound follows from Corollary \ref{bounddepth}, as the depth of $T_{kl}$ is  $l$.

	We are going to prove the upper bound by induction on $l$. If $l=1$, then $T_{k1}$ is formed by $k$ isolated vertices, i.e., $T_{k1}=\is_k$. We have already proved (Proposition \ref{genz12} and Theorem \ref{genzn}) that $m(\is_2) = 2$ and $m(\is_k) = 3$ for $k \geq 3$, which gives the upper bound for the case $l=1$.

	We prove the inductive step only for part (c), the other part (b) being very similar. Let us suppose that $m(T_{kl}) \leq 3l$ and we want to show that $m(T_{k(l+1)}) \leq 3l+3$. 
	
	As shown in Figure 2, we can construct $T_{k(l+1)}$ from $T_{kl}$ in two steps. First, we connect all the roots of $T_{kl}$ to a new root $r$, thus creating the rooted tree $U_{kl}$, and then we obtain $T_{k(l+1)}$ as the disjoint union of $k$ copies of $U_{kl}$.

	\begin{figure}[H]
		\centering
		\includegraphics[width = \textwidth]{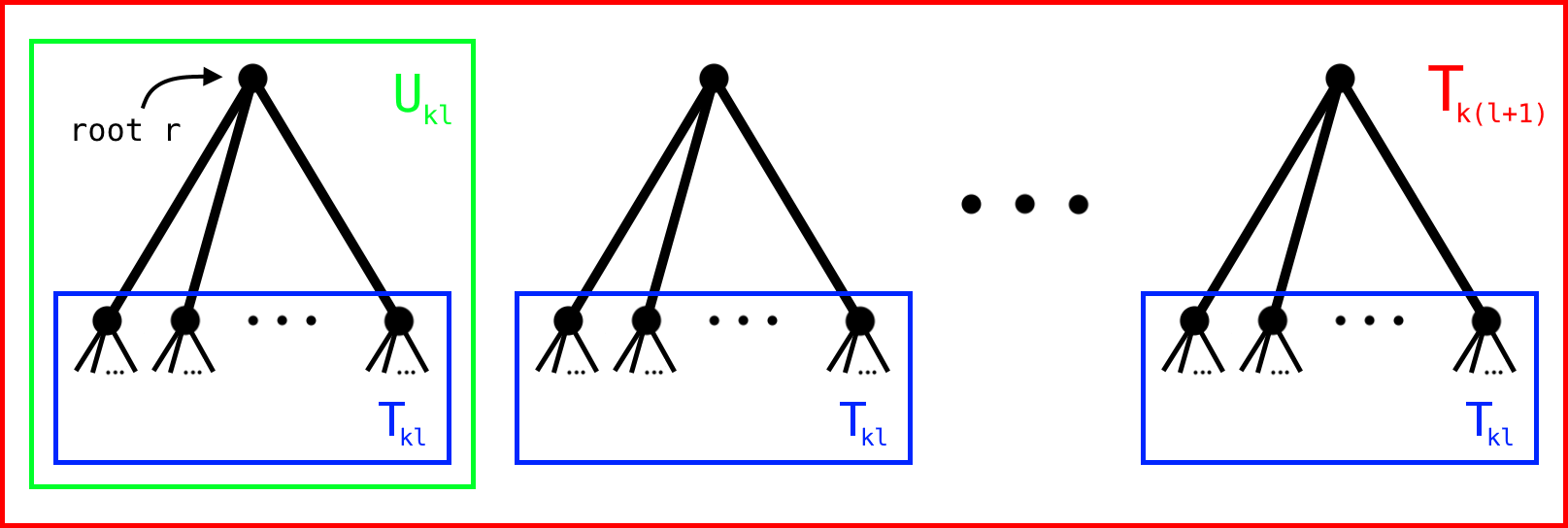}
		\caption{Construction of $T_{k(l+1)}$ from $T_{kl}$.}
	\end{figure}
	
	Let $n = |V(T_{kl})|$. We use the assumption $m(T_{kl}) \leq 3l$ to find a set $X = \{\mb{g}_1, \dots, \mb{g}_{3l}\} \subseteq \Zn$ that generates $G(T_{kl})$ as a semiring. We are going to show that $m(U_{kl}) \leq 3l+1$ by finding its generating set $H = \{ \mb{h}_1,\dots,\mb{h}_{3l+1}\} \subseteq \Z^{n+1}$ consisting of $3l+1$ vectors. We can assume that the first coordinate of these vectors corresponds to the root $r$ of $U_{kl}$.
	
	For any $i \in [3l]$, we let $\mb{h}_i = (-1, \mb{g}_i)$ and we let $\mb{h}_{3l+1} = \mb{e}_1$. Adding the vector $\mb{h}_{3l+1}$ to each $\mb{h}_i$, we obtain $(0,\mb{g}_i)$, which we can be used to generate $(0,\mb{w})$ for any $\mb{w} \in \Zn$ (since $X$ generates $G(T_{kl})$). 
	
	Let $(c,\mb{v}) \in \Z^{n+1}$ be an arbitrary vector, $c \in \Z, \mb{v} \in \Zn$. If $c \geq 0$, we can obtain $(c,\mb{v})$ as $c \cdot \mb{h}_{3l+1} + (0,\mb{v})$. On the other hand, if $c < 0$, then we generate $(c,\mb{v})$ as $(-c) \cdot \mb{h}_1 + (0, \mb{v} - c \cdot \mb{g}_1)$, which finishes the proof that $m(U_{kl}) \leq 3l+1$.

	Since $T_{k(l+1)}$ is constructed as the disjoint union of $k$ copies of $U_{kl}$, the desired bound $m(T_{k(l+1)}) \leq 3l+3$ follows from Theorem \ref{boundcopies} and already proven bound $m(U_{kl}) \leq 3l+1$.
\end{proof}

Now it only takes one last step to give the bounds on $m(F,R)$ for a general rooted forest $(F,R)$. We also obtain a tighter upper bound for binary forests, i.e., in the case of width 2.

\begin{thm} \label{bounds}
	Let $(F,R)$ be a rooted forest of depth $l$. 

(a) We have $l+1 \leq m(F,R) \leq 3l$.

(b) If $(F,R)$ has width $2$, then $l+1 \leq m(F,R) \leq 2l$.
\end{thm}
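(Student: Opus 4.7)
The lower bound $m(F,R) \geq l+1$ in both parts is immediate from Corollary \ref{bounddepth}, so all the work lies in the upper bounds. My plan is to reduce to the universal forests $T_{kl}$ already controlled by Theorem \ref{boundtkl}, exploiting the monotonicity of $m$ under $\preceq$ from Lemma \ref{subtree}.

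The key combinatorial claim is that any rooted forest $(F,R)$ of depth at most $l$ and width at most $k$ satisfies $(F,R) \preceq T_{kl}$. I would prove this by constructing an embedding greedily, level by level. The width hypothesis guarantees that $(F,R)$ has at most $k$ roots, each root has at most $k$ children, and every other vertex has at most $k$ children (its total degree being at most $k+1$, one neighbour of which is the parent). Since $T_{kl}$ has exactly $k$ roots, each of degree $k$, and every non-root non-leaf vertex has $k$ children, the embedding proceeds: match the roots of $(F,R)$ injectively to roots of $T_{kl}$, then inductively send the children of each matched vertex to distinct children of its image, descending at most $l$ levels. Once $(F,R)$ has been identified with a subtree of $T_{kl}$, the unused portion of $T_{kl}$ consists of subtrees hanging off the image, which can be removed by iterated leaf-deletion starting from the deepest vertices.

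Granting this embedding, part (a) follows by taking $k = \max\{3, \mathrm{width}(F,R)\}$: then $(F,R) \preceq T_{kl}$, so Lemma \ref{subtree} gives $m(F,R) \leq m(T_{kl})$, which is at most $3l$ by Theorem \ref{boundtkl}(c) when $k \geq 3$; when the width is $1$ or $2$, the sharper bounds in Theorem \ref{boundtkl}(a)(b) are in any case at most $3l$. Part (b) is the same argument with $k = 2$: width $2$ yields $(F,R) \preceq T_{2l}$, and hence $m(F,R) \leq m(T_{2l}) \leq 2l$ by Theorem \ref{boundtkl}(b).

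The main obstacle I expect is verifying the embedding $(F,R) \preceq T_{kl}$ cleanly: the relation $\preceq$ permits only leaf-deletion rather than removal of internal vertices, so one must justify that the complement of the image of $(F,R)$ inside $T_{kl}$ can genuinely be pruned from the bottom up. The definitions of width and of $T_{kl}$ have been calibrated precisely to make this work, so the argument is essentially careful bookkeeping rather than a genuinely hard step; the real content of the theorem has already been packaged in Theorem \ref{boundtkl}, and the present result is in effect a universality corollary.
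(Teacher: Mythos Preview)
Your proposal is correct and follows essentially the same approach as the paper's own proof: the paper also derives the lower bound from Corollary \ref{bounddepth}, observes that $(F,R) \preceq T_{kl}$ for $k$ the width of $(F,R)$, and then invokes Lemma \ref{subtree} together with Theorem \ref{boundtkl}. The only difference is that the paper dismisses the embedding $(F,R) \preceq T_{kl}$ as ``easy to see'' while you spell out the level-by-level matching and the pruning argument; your more careful case split (taking $k=\max\{3,\mathrm{width}\}$ versus just using the actual width and noting the smaller bounds are still $\leq 3l$) is cosmetic.
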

\begin{proof}
	We prove both parts together. The lower bound follows from Corollary \ref{bounddepth}. Let us denote the width of $(F,R)$ by $k$. It is easy to see that $(F,R) \preceq T_{kl}$, and so $m(F,R) \leq m(T_{kl})$ by  Lemma \ref{subtree}. Using the bounds on $T_{kl}$ from Theorem \ref{boundtkl}, we obtain the result.
\end{proof}

Note that one can prove the upper bound in Theorem \ref{bounds}(a) directly from Theorem \ref{genzn}: Let us sketch the construction of a set $X$ generating $G(F,R)$ such that $ |X| \leq 3l$, where $l$ is the depth of $(F,R)$. 

We split vertices of $(F,R)$ into $l$ disjoint subsets $V_1, \dots, V_l$, where $V_i = \{v \in V(F) \mid \text{depth of }$ $v \text{ is}$ $\text{exactly } i \}$. For each $V_i$, we take (at most) three generators of $G(\is_{n_i})$, where $n_i = |V_i|$  (see Proposition \ref{genz12} and Theorem \ref{genzn}). We set the other coordinates (corresponding to vertices that do not belong to $V_i$) of those generators to 0 and include the resulting vectors in $X$. It can be shown that such $X$ generates $G(F,R)$.

\section{Concluding remarks}

While we have proved that the number of generators $m(F,R)$ grows linearly with the depth of the forest,  
it seems hard to determine the precise value of $m(F,R)$ for all rooted forests $(F,R)$. The following open question suggests a possible answer.

\begin{question} \label{que}
	Let $(F,R) \neq  \isn$ be a rooted forest of depth $l$. Does $m(F,R)$ equal $l+1$?
\end{question}

Note that $m(F,R) \geq l+1$ is true by Corollary \ref{bounddepth}. For Question \ref{que} to have positive answer, it thus suffices to find a generating set of $G(F,R)$ of size $l+1$. We were able to do so for several classes of rooted forests. We end the paper by presenting these partial results.

First, in Figure 3 we give a table of generating sets of parasemifields $G(F,R)$ such that $(F,R)$ contains less than $5$ vertices and $(F,R) \neq  \isn$.
This shows that Question \ref{que} has positive answer for small rooted forests.

\begin{figure}
	\centering
	\includegraphics[width = \textwidth]{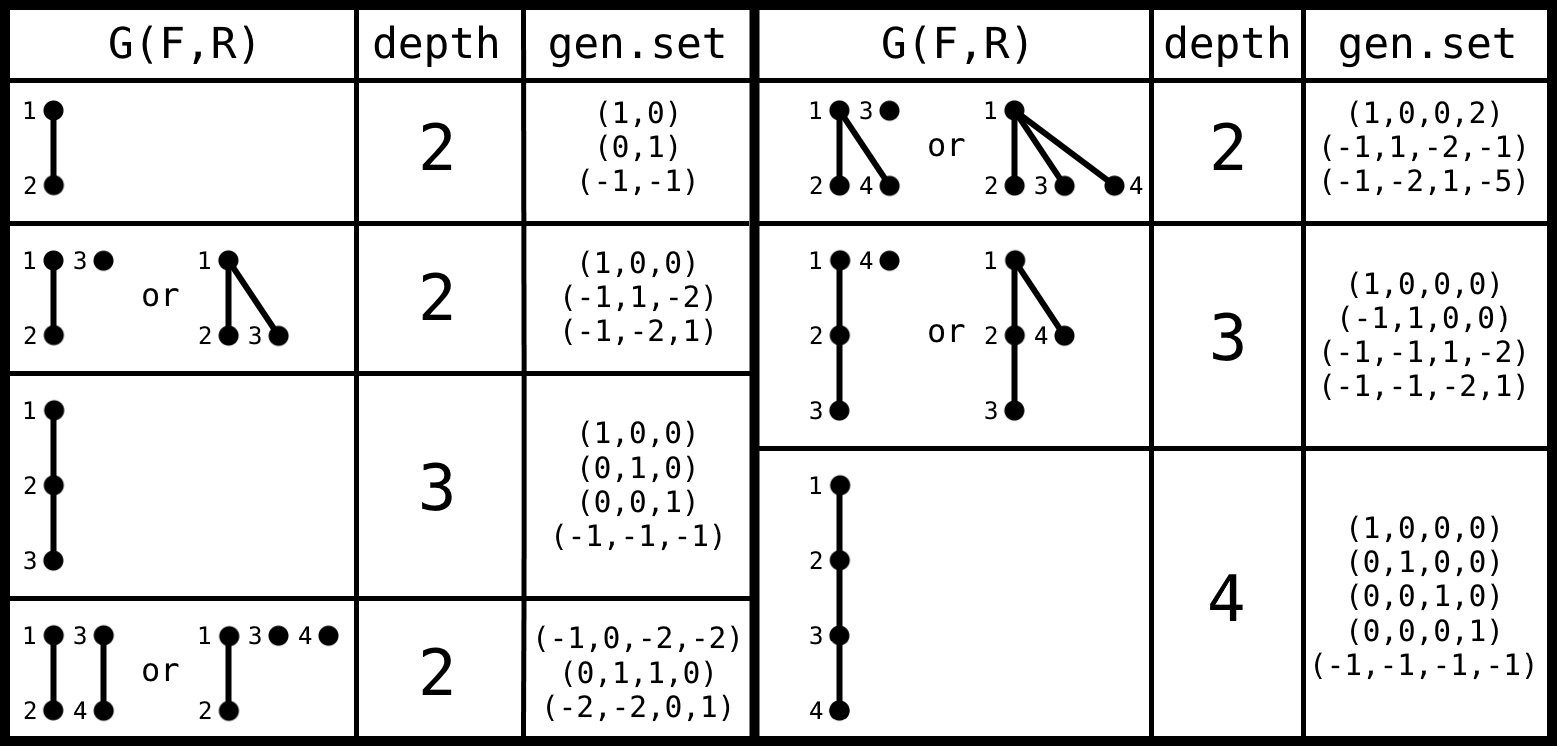}
	\caption{Generators of $G(F,R)$ for $(F,R) \neq  \isn$, $ |V(F)| < 5$. (The numbering of vertices in the rooted forests gives the order of coordinates in the generating sets.)}
\end{figure}
 We are next going to look at rooted forests that are the disjoint union  of several rooted paths and we will answer Question \ref{que} for some of them.

	For  positive integers $k,n$, let us denote by $kP_n$ the rooted forest formed by the disjoint union of $k$ copies of the rooted path $P_n$.

\begin{thm}
	Let $k,n$ be positive integers and $k \leq n+1$. Then there exists a set of $n+1$ generators of $G(kP_n)$. Consequently, $m(kP_n) = n+1$.
\end{thm}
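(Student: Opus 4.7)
The plan is to establish both bounds. The lower bound $m(kP_n)\ge n+1$ is immediate from Corollary~\ref{bounddepth} since the depth of $kP_n$ is $n$, so all the content lies in the upper bound: I aim to construct $n+1$ explicit generators of $G(kP_n)=(\Z^n)^k$.

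By Lemma~\ref{n+1} it suffices to produce from the proposed generators the $nk$ unit vectors $\mb{e}_{(i,j)}$ for $(i,j)\in[n]\times[k]$, together with one strictly negative vector. The hypothesis $k\le n+1$ is exactly what allows $n+1$ generators to be enough, since it means we have at least as many generators as copies and can ``tag'' each copy with a distinguished generator.

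My first attempt would be a two-block construction. A \emph{root block} of two or three generators (two if $k\le 2$, three if $k\ge 3$) has leaves all zero and root coordinates chosen by the polynomial ansatz of Theorem~\ref{genzn}: for $k\ge 3$ the roots are $(1,\ldots,k)$, $(k^2,k^2-3,\ldots,1)$, and $(-1,\ldots,-1)$. Since the leaves are zero, the subsemiring they generate lives entirely in the ``roots'' and is isomorphic to $G(\is_k)$, so by Theorem~\ref{genzn} every root pattern in $\Z^k$ is realisable. A \emph{leaf block} of the remaining $n-1$ or $n-2$ generators then carries the leaf information, with values chosen so that in every copy $j$ the $n+1$ projections $(\mb{h}_l)_{(\cdot,j)}$ form an additive semigroup generating set of $\Z^n$ (e.g.\ standard-basis leaves together with one negative leaf correction). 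To reach an arbitrary target $(\mb{v}^{(1)}\mid\cdots\mid\mb{v}^{(k)})$ the plan is, for each $j\in[k]$, to form a $+$-combination $\mb{u}_j$ whose projection to copy $j$ equals $\mb{v}^{(j)}$ and whose root-block contribution is chosen large enough to lex-dominate that copy; the target is then $\mb{u}_1\vee\cdots\vee\mb{u}_k$.

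The main obstacle I anticipate is that the leaf-block coefficients forced by the target $\mb{v}^{(j)}$ also perturb root values in the copies $j'\ne j$, potentially spoiling the required lex-dominance of $\mb{u}_j$ in copy $j$. The key will be to scale the polynomial root values in the root block by a constant large enough to swamp these perturbations uniformly across all feasible targets. A secondary subtlety is the boundary case $n=2$, $k=3$, where the leaf block is empty and the three root-block generators must themselves already carry enough leaf information to yield a $\Z^n$ generating set in each copy; here I would hand-tune the leaves using the template provided by Figure~3. If the direct construction proves too fragile in these small cases, a back-up is to induct on $n$, using Figure~3 for the base cases and the disjoint-copy technique of Theorem~\ref{boundcopies} together with the constraint $k\le n+1$ to perform the inductive step.
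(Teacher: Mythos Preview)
Your two-block construction has a concrete gap at the step you take for granted, namely that ``in every copy $j$ the $n+1$ projections $(\mb{h}_l)_{(\cdot,j)}$ form an additive semigroup generating set of $\Z^n$''. With $k\ge 3$ you reserve three root-block generators with all leaf coordinates equal to zero, leaving only $n-2$ leaf-block generators. Projecting to any fixed copy $j$ and then to the leaf coordinates $2,\ldots,n$, the three root-block projections vanish, so the leaf space $\Z^{n-1}$ would have to be generated as a $+$-semigroup by at most $n-2$ vectors; Proposition~\ref{genplus} says this is impossible. (For $k\le 2$ the same count gives $n-1$ leaf generators for $\Z^{n-1}$, still one short.) Since on a single path the operation $\vee$ is lexicographic max and always returns one of its two arguments, allowing $\vee$ inside a single copy does not help. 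There is a second, independent problem with the dominance step: once you insist that $(\mb{u}_j)^{(j)}=\mb{v}^{(j)}$, the root coordinate of $\mb{u}_j$ in copy $j$ is pinned at $v^{(j)}_1$, so you cannot ``scale the root block'' to make $\mb{u}_j$ dominate there; what you would actually need is to suppress the roots of every \emph{other} $\mb{u}_{j'}$ in copy $j$ below $v^{(j)}_1$, which your scaling remedy does not address. Your back-up via Theorem~\ref{boundcopies} yields at best $m(P_n)+2=n+3$ generators, not $n+1$.

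The paper's argument avoids any root/leaf split. Working directly with $k=n+1$, it takes generators $\mb{g}_i$ that equal the constant vector $\mb{-2}$ in block $i$ and a standard basis vector $\mb{e}_{\sigma(i)}$ in every other block (with $\sigma(i)=i$ for $i\le n$ and $\sigma$ chosen so that each block receives all of $\mb{e}_1,\ldots,\mb{e}_n$ across the $n$ non-diagonal contributions). Then $\sum_i\mb{g}_i=\mb{-1}$, the identity $\sum_i(\mb{g}_i\vee 2\mb{g}_i)=\mb{0}$ produces the zero vector, and for $i\ne j$ the $+$-combination $\mb{-2}+\mb{g}_j-\mb{g}_i$ has $\mb{e}_j$ in block $i$ and a lex-negative vector in every other block, so its $\vee$ with $\mb{0}$ is the required unit vector. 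The key idea you are missing is that each generator must play a double role---its $\mb{-2}$ block singles out one copy while its $\mb{e}_i$ blocks simultaneously supply a basis direction---rather than dedicating separate generators to ``which copy'' and ``which coordinate''.
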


\begin{proof}
	It suffices to prove the theorem for $k = n+1$. Elements from $G((n+1)P_n)$ are vectors from $(\Zn)^{n+1}$ which will be denoted as $\mb{v} = (v_1, \dots, v_n \mid v_{n+1}, \dots, v_{2n} \mid \dots \mid v_{n^2+1}, \dots, v_{n^2+n})$.
	
	We define the set $X = \{\mb{g}_1, \dots, \mb{g}_{n+1}\}$ of $n+1$ vectors from $(\Zn)^{n+1}$ as follows:
	\begin{align*}
		\mb{g}_1 &= (\mb{-2} \mid \mb{e}_1 \mid  \mb{e}_1 \mid \dots \mid  \mb{e}_1 \mid  \mb{e}_1) \\
		 \mb{g}_2 &= ( \mb{e}_2 \mid \mb{-2} \mid  \mb{e}_2 \mid \dots \mid  \mb{e}_2 \mid  \mb{e}_2) \\
		\vdots & \hspace{2.65cm} \ddots \\
		\mb{g}_n &= ( \mb{e}_{n} \mid   \mb{e}_{n} \mid \mb{e}_{n} \mid \dots \mid \mb{-2} \mid   \mb{e}_{n}) \\
		 \mb{g}_{n+1} &= ( \mb{e}_1 \mid  \mb{e}_2 \mid \mb{e}_3 \mid \dots \mid \mb{e}_n \mid \mb{-2}).
	\end{align*}
	
	We are going to show that $X$ generates $G((n+1)P_n)$. Let us start with generating two important vectors.
	\begin{align*}
		\mb{-1} &=  \mb{g}_1 +  \mb{g}_2 + \dots +  \mb{g}_{n+1}\\
		\mb{0} &= ( \mb{g}_1 \vee 2 \cdot  \mb{g}_1) + ( \mb{g}_2 \vee 2 \cdot  \mb{g}_2) + \dots + ( \mb{g}_{n+1} \vee 2 \cdot  \mb{g}_{n+1})
	\end{align*}
	
	We will finish the proof by generating all the vectors $\mb{e}_{ni+j}$ (for $i \in \{0, \dots, n\}$ and $j \in [n]$) and applying Lemma \ref{n+1}. If $i \neq j$, we first generate the vector $\mb{v}_{ij}$ as follows.
	
	\begin{align*}
		\mb{v}_{ij} &= \mb{g}_i + 3 \cdot  \mb{g}_j + \sum_{k \notin \{i,j\}} (2 \cdot  \mb{g}_k) \\
		&=  \mb{g}_j - \mb{g}_i + 2 \sum_{k=1}^{n+1}  \mb{g}_k \\
		&= \mb{-2} +  \mb{g}_j - \mb{g}_i 
	\end{align*}
	
	Observe that the vector $\mb{v}_{ij}$ contains $\mb{e}_j$ in the $i$th $n$-tuple and all the other $n$-tuples contain a non-positive vector. It follows that $\mb{e}_{ni+j} = \mb{v}_{ij} \vee \mb{0}$.

	The approach in the case $i=j$ is similar. We generate:
	\begin{align*}
		\mb{w}_i &= \mb{g}_i + 3 \cdot  \mb{g}_{n+1} + \sum_{k \notin \{i,n+1\}} (2 \cdot  \mb{g}_k) \\
		\mb{e}_{ni+i} &= \mb{w}_i \vee \mb{0}
	\end{align*}
	and we are done.
\end{proof}

It can also be shown that $m(kP_2) = 3$ for any $k \in \N$, using the generating set of $G(\is_{2k})$ from Theorem \ref{genzn}. The proof is similar to the proof of Theorem \ref{genzn}, but slightly more technical.

Finally, the analogous question concerning numbers of generators of \textit{semifields} remains completely open. The answer primarily hinges on the following question that offers rich opportunities for further research.
	
\begin{question} \label{que2}
	Suppose that $S$ is an fg-semifield constructed from a parasemifield $G(F,R)$ and an abelian group $A$ as in Theorem \ref{classemfg}(4). How does the minimal number of semiring generators of $S$ depend on the rooted forest $(F,R)$ and on the embedding in the abelian group $A$?
\end{question}


\begin{thebibliography}{B}


{\footnotesize

\bibitem{AF}
\rm M. Anderson, T. Feil, 
\it Lattice-Ordered Groups, 
\rm  Reidel Texts in the Mathematical Sciences, {\bf 1988}.

\bibitem{BHJK}
\rm R. El Bashir, J. Hurt, A. Jan\v ca\v r\'\i k, T. Kepka,
\it Simple commutative semirings,
\rm J. Algebra {\bf 236} (2001), 277--306.

\bibitem{BDN-F}
\rm L. P. Belluce, A. Di Nola, 
\it Yosida type representation for perfect MV-algebras,
\rm Math. Logic Quart. {\bf 42} (1996), 551--563. 

\bibitem{BDNF-B}
\rm L. P. Belluce, A. Di Nola,  A. R. Ferraioli,
\it MV-semirings and their sheaf representations, 
\rm Order {\bf 30} (2013), 165--179.


\bibitem {BDN-E} 
\rm L. P. Belluce, A. Di Nola, G. Georgescu,
\it Perfect MV-algebras and l-rings,
\rm J. Appl. Non-Classical Logics {\bf 9} (1999), 159--172. 

\bibitem{BCM}
\rm M. Busaniche, L. Cabrer, D. Mundici, 
\it Confluence and combinatorics in finitely generated unital lattice-ordered abelian groups, 
\rm Forum Math. {\bf 24} (2012), 253--271.


\bibitem{DNG}
\rm A. Di Nola, B. Gerla, 
\it Algebras of Lukasiewicz's logic and their semiring reducts,
\rm Contemp. Math. {\bf 377} (2005), 131--144.

\bibitem{droste}
{M.~Droste, W.~Kuich, H.~Vogler (eds.), \emph{Handbook of Weighted Automata}, Springer, \textbf{2009}.}




\bibitem{GRS}
\rm B. Gerla, C. Russo, L. Spada, 
\it Representation of perfect and local MV-algebras,
\rm Math. Slovaca {\bf 61} (2011), 327--340.

\bibitem{GH}
\rm A. M. W. Glass, W. C. Holland, 
\it Lattice-Ordered Groups, 
\rm Kluwer Academic Publishers, {\bf 1989}.

\bibitem{G}
\rm J. S. Golan, 
\it Semirings and Their Applications, 
\rm Kluwer Academic, Dordrecht, {\bf 1999}.

\bibitem{IKN}
S.~N.~Il'in, Y.~Katsov, T.~G.~Nam, \emph{Toward homological structure theory of semimodules: on semirings all of whose cyclic semimodules are projective},
J.~Algebra \textbf{476} (2017), 238--266.


\bibitem{ims}
I.~Itenberg, G.~Mikhalkin, E.~Shustin, \emph{Tropical algebraic geometry} (2nd ed.), Birkh\"{a}user, Basel, \textbf{2009}.

\bibitem{ir}
Z.~Izhakian, L.~Rowen, \emph{Congruences and coordinate semirings of tropical varieties}, Bull.~Sci. Math. \textbf{140(3)} (2016), 231--259.

\bibitem{JK}
\rm J. Je\v zek, T. Kepka, 
\it Finitely generated commutative division semirings, 
\rm Acta Univ. Carolin. Math. Phys. \textbf{51} (2010), 3--27.

\bibitem{JKK}
\rm J. Je\v zek, V. Kala, T. Kepka, 
\it Finitely generated algebraic structures with various divisibility conditions, 
\rm Forum Math. {\bf 24} (2012), 379--397.


\bibitem{Kala15}
V. Kala, \textit{Lattice-ordered abelian groups finitely generated as semirings},
J. Commut. Alg. \textbf{9} (2017), 387--412.

\bibitem{KK}
\rm V. Kala, T. Kepka,
\it A note on finitely generated ideal-simple commutative semirings,
\rm Comment. Math. Univ. Carol. {\bf 49} (2008), 1--9.

\bibitem{KKK} 
\rm V. Kala, T. Kepka, M. Korbel\'a\v r, 
\it Notes on commutative parasemifields, 
\rm Comment. Math. Univ. Carolin. {\bf 50} (2009), 521--533.

\bibitem{KaKo1}
\rm V. Kala, M. Korbel\'a\v r, 
\it Congruence simple subsemirings of $\mathbb Q^+$, 
\rm Semigroup Forum \textbf{81} (2010), 286--296.

\bibitem{KalKor}
V. Kala, M. Korbelář, \textit{Idempotence of finitely generated commutative
semifields}, Forum Math. \textbf{30} (2018), 1461--1474.


\bibitem{KNZ}
Y.~Katsov, T.~G.~Nam, J.~Zumbr\"{a}gel, \emph{On simpleness of semirings and complete semirings}, J.~Algebra Appl. \textbf{13(6)} (2014), 29 pp.

\bibitem{KL}
M. Korbel\'a\v r, G. Landsmann, \textit{One-generated semirings and additive divisibility}, J. Algebra Appl. \textbf{16} (2017), 1750038, 22 pp.


\bibitem{Le} 
E.~Leichtnam, \emph{A classification of the commutative Banach perfect semi-fields of characteristic 1. Applications}, Math. Ann. \textbf{369(1--2)}, 653--703.

\bibitem{litvinov}
\rm G. L. Litvinov,
\it The Maslov dequantization, idempotent and tropical mathematics: a brief introduction,
\rm Idempotent mathematics and mathematical physics, Contemp. Math. \textbf{377} (2005), Amer. Math. Soc., pp.  1–17. \textit{Extended version at arXiv:math/0507014.}


\bibitem{mmr} 
G.~Maze, C.~Monico, J.~Rosenthal, \emph{Public key cryptography based on semigroup actions}, Adv.~Math.~Commun. \textbf{1(4)} (2007), 489--507.

\bibitem{monico}
\rm C. J. Monico,
\it Semirings and semigroup actions in public-key cryptography,
\rm PhD Thesis, University of Notre Dame, USA, {\bf 2002}, vi+61 pp.

\bibitem{M}
\rm D. Mundici, 
\it Interpretation of AF $C^*$-algebras in \L ukasiewicz sentential calculus, 
\rm J. Funct. Anal. {\bf 65} (1986), 15--63.

\bibitem{NL} 
\rm A. Di Nola and A. Lettieri, 
\it Perfect MV-Algebras are Categorically Equivalent to Abelian $\ell$-Groups, 
\rm Studia Logica {\bf 53} (1994), 417--432.


\bibitem{Sima}
L. Šíma, \textit{Finitely generated semirings and semifields}, Master's thesis, Charles University, Czech Republic, \textbf{2021}, iii+31 pp.


\bibitem{W} 
\rm H. J. Weinert, 
\it \" Uber Halbringe und Halbk\" orper. I., 
\rm Acta Math. Acad. Sci. Hungar. {\bf 13} (1962), 365--378. 

\bibitem{WW}
\rm H. J. Weinert, R. Wiegandt,
\it On the structure of semifields and lattice-ordered groups,
\rm Period. Math. Hungar. {\bf 32} (1996), 147--162.

\bibitem{Yang}
\rm Y. Yang,
\it $\ell$-Groups and B\' ezout Domains,
\rm PhD Thesis, Universit\" at Stuttgart, Germany, {\bf 2006}, viii+116 pp.

\bibitem{zumbragel}
\rm J. Zumbr\" agel,
\it Public-key cryptography based on simple semirings,
\rm PhD Thesis, Universit\" at Z\" urich, Switzerland, {\bf 2008}, x+99 pp.
}

\end{thebibliography}
\end{document}